\definecolor{lightred}{rgb}{1,0.6,0.6}
\newtheorem{thm}{Theorem}[section]
\newtheorem{lem}[thm]{Lemma}
\newtheorem{cor}[thm]{Corollary}
\theoremstyle{definition}
\newtheorem{defn}[thm]{Definition}
\theoremstyle{remark}
\newtheorem*{rem}{Remark}
\newcommand{\Z}{\mathbb{Z}}
\newcommand{\de}{\coloneqq}
\newcommand*{\rom}[1]{\expandafter\@slowromancap\romannumeral #1@}
\DeclareMathOperator{\Aut}{Aut}
\DeclareMathOperator{\Out}{Out}
\DeclareMathOperator{\Inn}{Inn}
\DeclareMathOperator{\st}{st}
\DeclareMathOperator{\lk}{lk}
\DeclareMathOperator{\PSO}{PSO}
\DeclareMathOperator{\PSA}{PSA}
\DeclareMathOperator{\id}{id}
\DeclareMathOperator{\Inv}{Inv}
\DeclareMathOperator{\Per}{Per}
\DeclareMathOperator{\GL}{GL}
\newcommand{\sg}[2]{S_{#2}^{#1}}
\title{Right-angled Artin groups as finite-index subgroups of their outer automorphism groups}
\author{Manuel Wiedmer}
\date{}
\begin{document}

\maketitle

\begin{abstract}
We prove that every right-angled Artin group occurs as a finite-index subgroup of the outer automorphism group of another right-angled Artin group. We furthermore show that the latter group can be chosen in such a way that the quotient is isomorphic to $(\Z/2\Z)^N$ for some $N$. For these, we give explicit constructions using the group of pure symmetric outer automorphisms. Moreover, we need two conditions by Day--Wade and Wade--Brück about when this group is a right-angled Artin group and when it has finite index.
\end{abstract}
\let\thefootnote\relax\footnotetext{Mathematics Subject Classification (2020): 20E36, 20F36 (primary)}
\section{Introduction}
Right-angled Artin groups are defined by a presentation using a graph $\Gamma$ and denoted by $A_\Gamma$. They were first introduced by Andreas Baudisch in his paper \cite{Baudisch:introduction} under the name of ``semi-free groups''. In recent years they have been used, among other things, for combinatorial approaches to geometric and topological problems; see for example \cite{introduction:example1} and \cite{introduction:example2}. Right-angled Artin groups can be seen as interpolating between free groups and free abelian groups. Similarly, their outer automorphism groups $\Out(A_\Gamma)$ may be viewed as interpolating between $\GL_n(\Z)$ and $\Out(F_n)$. For examples of how this has been used, one can consider \cite{introduction:example3}, \cite{introduction:example4}, \cite{introduction:example5}, \cite{introduction:example6} and \cite{introduction:example7}.\par
Our aim is to study a topic introduced by Matthew B. Day and Richard D. Wade in \cite{DayWade}, which is the main motivation for this paper. We want to understand finite-index subgroups of $\Out(A_\Gamma)$ that are right-angled Artin groups. Day--Wade ask when the group $\Out(A_\Gamma)$ contains a right-angled Artin group as finite-index subgroup; see \cite[Question 1.1]{DayWade}. To partially answer this question, Day--Wade consider the subgroup of so-called pure symmetric outer automorphisms, denoted by $\PSO(A_\Gamma) \leq \Out(A_\Gamma)$. They give a condition on when it is a right-angled Artin group and describe explicitly which one it is in case this condition is satisfied.\par
We look at the question by Day--Wade from another perspective. Day--Wade fix the graph $\Gamma$ and ask if $\Out(A_\Gamma)$ has another right-angled Artin group $A_\Lambda$ as finite-index subgroup. In contrast, we fix the graph $\Lambda$ and obtain the following main result.
{\renewcommand{\thethm}{A}
\begin{thm}\label{thm:anyraagisfinindsubgrofoutraagintro}
For any graph $\Lambda$, the right-angled Artin group $A_\Lambda$ is a finite-index subgroup of the outer automorphism group of some other right-angled Artin group $A_\Gamma$.
\end{thm}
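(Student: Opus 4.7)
The plan is to give an explicit construction of the graph $\Gamma$ from $\Lambda$ that takes advantage of the pure symmetric outer automorphism subgroup $\PSO(A_\Gamma)$. We arrange that (i) $\PSO(A_\Gamma)$ is isomorphic to $A_\Lambda$ as a right-angled Artin group via the Day--Wade characterisation, and simultaneously (ii) $\PSO(A_\Gamma)$ has finite index in $\Out(A_\Gamma)$ via the Wade--Brück criterion mentioned in the abstract. Both conditions must be read off from the combinatorics of $\Gamma$, so the construction is a combinatorial engineering problem; the quotient structure $(\Z/2\Z)^N$ will then appear naturally, with each $\Z/2\Z$ factor coming from an inversion of a vertex.

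First I would set up the precise description of the generators of $\PSO(A_\Gamma)$: they are the partial conjugations $(v,C)$, where $v$ is a vertex of $\Gamma$ and $C$ is a connected component of $\Gamma\setminus\st(v)$, acting by conjugating the generators lying in $C$ by $v$ and fixing all others. I would then invoke the Day--Wade condition that tells us when the subgroup generated by such partial conjugations is a right-angled Artin group, together with the explicit description of the underlying graph: its vertices are the admissible pairs $(v,C)$ and its edges are governed by a combinatorial compatibility relation between the stars and components around $v$ and around other vertices.

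Next I would construct $\Gamma$ vertex-by-vertex from $\Lambda$. For each vertex $u$ of $\Lambda$ I introduce a small \emph{gadget} in $\Gamma$ (roughly, a designated conjugator vertex $v_u$ together with a designated ``component vertex'' that it conjugates), and I arrange the adjacencies between distinct gadgets so that the Day--Wade indexing graph of $\PSO(A_\Gamma)$ is exactly $\Lambda$: two gadgets are linked in $\Gamma$ precisely when the corresponding vertices of $\Lambda$ are adjacent, which translates into commutation of the two partial conjugations. The same gadget vertices must be rigid enough to suppress all exotic automorphisms of $A_\Gamma$: there should be no transvections, no unwanted extra partial conjugations, and the only graph symmetries available should be inversions of individual vertices, so that the Wade--Brück condition for finite index applies and the quotient is $(\Z/2\Z)^N$ for $N$ equal to the number of vertices of $\Gamma$.

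The main obstacle I anticipate is that conditions (i) and (ii) pull in opposite directions: to realise each vertex of $\Lambda$ as a partial conjugation one needs vertices whose stars separate $\Gamma$ in a controlled way, but introducing such separations tends to create additional outer automorphisms (further transvections, further partial conjugations, and non-trivial graph automorphisms of $\Gamma$) that destroy the finite-index property or change the isomorphism type of $\PSO(A_\Gamma)$. The core work is therefore the combinatorial verification that a single well-chosen gadget per vertex of $\Lambda$ balances these tensions: it produces exactly one partial conjugation per vertex of $\Lambda$, realises the commutation pattern of $\Lambda$ without extra edges, introduces no stray $\PSO$-generators, and leaves only inversions as non-$\PSO$ outer automorphisms. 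Once both Day--Wade hypotheses and the Wade--Brück hypothesis have been checked on this explicit $\Gamma$, the theorem follows immediately.
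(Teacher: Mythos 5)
Your plan is the same strategy as the paper's (realise $A_\Lambda$ as $\PSO(A_\Gamma)$ via the Day--Wade characterisation, get finite index via the Wade--Brück no-transvection criterion), but as a proof it has a genuine gap: the graph $\Gamma$ is never exhibited. Everything you defer to ``a single well-chosen gadget per vertex of $\Lambda$'' is precisely the mathematical content of the theorem, and it is not clear that a purely local, one-gadget-per-vertex scheme can work. In the construction that does work, the crucial gadget is \emph{shared}: one adds a pair $\{a_1,a_2\}$ attached through vertices $b_1,b_2$ to every vertex of $\Lambda$, so that for each $v_i\in V(\Lambda)$ the graph $\Gamma-\st(v_i)$ has exactly two components, one of which is always $\{a_1,a_2\}$; two non-adjacent vertices $v_i,v_j$ of $\Lambda$ then form a SIL-pair because $b_1,b_2\in\lk(v_i)\cap\lk(v_j)$, and the non-edges of the Day--Wade graph $\Theta$ come exactly from the fact that the relevant support-graph edges all involve this common component, while edges of $\Lambda$ produce edges of $\Theta$. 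On top of this one needs further vertices ($c_1,\dots,c_n$ attached cyclically to consecutive $v_i$, plus $d_1,d_2$) whose sole purpose is to destroy all $\Gamma$-legal transvections; verifying $\lk(u)\subseteq\st(w)\Rightarrow u=w$ is an exhaustive case check over all pairs of vertex types, and it genuinely uses $|V(\Lambda)|\geq 3$, so the graphs with one or two vertices need separate ad hoc constructions. None of these verifications --- that every support graph $\sg{\Gamma}{w}$ is a forest, that $\Theta\cong\Lambda$, that there are no transvections --- can even be started from your proposal, because there is no concrete $\Gamma$ to check them on; acknowledging the tension between conditions (i) and (ii) is not the same as resolving it.

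A smaller but relevant inaccuracy: the vertices of the Day--Wade graph $\Theta$ are not the ``admissible pairs $(v,C)$'' (i.e.\ partial conjugations). They are indexed by the edges and by all-but-one of the components of each support graph $\sg{\Gamma}{v}$ (vertices of type \rom{1} $\alpha^v_e$ and type \rom{2} $\beta^v_i$). The partial conjugations generate $\PSO(A_\Gamma)$, but the right-angled Artin presentation is on this different generating set, and the edge rule involves SIL-pairs and shared components rather than mere commutation of two partial conjugations. Since your plan is to engineer $\Gamma$ so that ``the indexing graph is exactly $\Lambda$'', getting this indexing right is essential before any gadget design can be attempted. Finally, note that the quotient being $(\Z/2\Z)^N$ requires in addition that $\Gamma$ have no non-trivial graph automorphisms, which forces a further modification of the construction (breaking the symmetry of the added vertices); for the theorem as stated, finite index alone suffices and the quotient is only $\Inv\rtimes\Per$.
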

}
This theorem is later stated in more detail as Theorem \ref{thm:anyraagisfinindsubgrofoutraag}. The question for which graphs $\Lambda$ we can find such a graph $\Gamma$ is already mentioned in \cite[Question 3.4]{WadeBrück}. In that paper, Richard D. Wade and Benjamin Brück study a different topic related to the outer automorphism group of right-angled Artin groups. In particular, they give a condition for when the group of pure symmetric outer automorphisms has finite index in the group of outer automorphisms. We use this condition by Wade--Brück and the one by Day--Wade mentioned above to develop Theorem \ref{thm:anyraagisfinindsubgrofoutraagintro}.\par
To prove this theorem, in Section \ref{chap:preliminaries} we introduce the background needed for our main result: Among other things, we discuss a set of generators for $\Aut(A_\Gamma)$, the subgroup $\PSO(A_\Gamma) \leq \Out(A_\Gamma)$ and the two conditions by Wade--Brück and Day--Wade mentioned above. We then show the main result in Section \ref{chap:construction}. For a given graph $\Lambda$, we construct a graph $\Gamma$ such that $A_\Lambda$ is a finite-index subgroup of $\Out(A_\Gamma)$. More precisely, we show that $A_\Lambda \cong \PSO(A_\Gamma)$ and that $\PSO(A_\Gamma)$ has finite index in $\Out(A_\Gamma)$ using the conditions by Day--Wade and Wade--Brück. Later in that section, we impose an additional condition on the graph $\Gamma$. Namely, it should have no non-trivial graph automorphisms. We show that Theorem \ref{thm:anyraagisfinindsubgrofoutraagintro} still holds with this additional condition by developing the construction further. This leads to an interesting corollary about the structure of the quotient $\Out(A_\Gamma)/\PSO(A_\Gamma)$ and lets us determine the index of $A_\Lambda$ in $\Out(A_\Gamma)$. Finally, we conclude this paper by discussing further questions that could be interesting to study in Section \ref{chap:conclusion}. In Appendix \ref{app:smallgraphs}, we cover some special cases of small graphs for which the constructions of Section \ref{chap:construction} do not work.\par
This paper is a shortened version of the author's Master Thesis at ETH Zurich, which can be found as \cite{Masterthesis}. I would like to express my sincere thanks to the supervisor of this thesis, Prof. Dr. Alessandra Iozzi, for making it possible and to the co-supervisor, Dr. Benjamin Brück, for suggesting this topic and for all his support during this project. He always helped me when I had trouble, answered all questions that came up and was open for many helpful discussions, during the thesis but also while changing it into this paper. Furthermore, I wish to thank the anonymous reviewer for the thorough reading of this text as well as his helpful comments and suggestions for the improvement of it.

\section{Preliminaries}\label{chap:preliminaries}
\subsection{Graphs}
Let $\Gamma$ be a graph. We write $V(\Gamma)$ for the set of vertices of $\Gamma$ and $E(\Gamma)$ for the set of edges of $\Gamma$, which is a set of unordered pairs of different vertices. In particular, all the graphs we consider are undirected and do not contain loops or multiple edges. Moreover, we only consider finite graphs, i.e. $V(\Gamma)$ is always a finite set. We write $v \sim w$ if $v$ is adjacent to $w$. For $S \subseteq V(\Gamma)$, we use the notation $\Gamma - S$ for the induced subgraph with vertex set $V(\Gamma) \setminus S$. When we talk of a component of a graph we always mean a connected component. Furthermore, we use $\lk(v)$ for the set of neighbours of $v$ and $\st(v)$ for the union of $\lk(v)$ with $v$ itself.\par

\subsection{Right-angled Artin groups}\label{sec:generators}
\begin{defn}\label{def:raag}
For a given non-empty graph $\Gamma$, the right-angled Artin group $A_\Gamma$ has the following presentation
\[A_\Gamma \de \left\langle V(\Gamma) \mid [v,w] = 1 \text{ for } \{v,w\} \in E(\Gamma) \right\rangle.\]
\end{defn}
Recall that $[v,w] \de vwv^{-1}w^{-1}$ is a notation for the commutator of $v$ and $w$. So, the generators of $A_\Gamma$ correspond to the vertices of $\Gamma$ and two generators commute if there is an edge between the two corresponding vertices in the graph $\Gamma$.\par
Next, we give a set of generators for the automorphism group of a right-angled Artin group $A_\Gamma$. This is based on \cite[subsection 2.5]{Vogtmann}. We refer the reader to this source for more details. For a proof of the statement, one can look at \cite{Servatius:generatorssource1} and \cite{Laurence:generatorssource2}, which are the original sources of this theorem. Servatius conjectured it and proved it for some special cases and Laurence gave a proof for general graphs. Note that the terminology in \cite{Servatius:generatorssource1} and \cite{Laurence:generatorssource2} is slightly different to the one we use here.\par
For simplicity, we assume that the graph $\Gamma$ consists of vertices $v_1,\dots,v_n$. There are four types of automorphisms that together generate $\Aut(A_\Gamma)$.
\begin{itemize}
    \item \textit{$\Gamma$-legal transvections:} For vertices $v_i$ and $v_j$ that satisfy the condition \mbox{$\lk(v_i) \subseteq \st(v_j)$}, we define the automorphism $T^l_{i,j}$ of $A_\Gamma$ by mapping the generators as follows: $v_i \mapsto v_jv_i$ and $v_k \mapsto v_k$ for $k \neq i$. These are called $\Gamma$-legal (left) transvections. Analogously, again for $v_i$ and $v_j$ with $\lk(v_i) \subseteq~\st(v_j)$, one can define the $\Gamma$-legal right transvection $T^r_{i,j}$ by mapping $v_i \mapsto~v_iv_j$ and $v_k \mapsto v_k$ for $k \neq i$.
    \item \textit{Partial conjugations:} Another type of generators are the so-called ($\Gamma$-legal) partial conjugations. Here, we often omit the term $\Gamma$-legal as we do not introduce other partial conjugations. For a vertex $v_j \in V(\Gamma)$ and a component $A$ of $\Gamma - \st(v_j)$, we define the partial conjugation $P_j^A$ by $v_i \mapsto v_jv_iv_j^{-1}$ for $v_i \in A$ and $v_k \mapsto v_k$ for $v_k \notin A$.
    \item \textit{$\Gamma$-legal permutations:} In order to get another automorphism of $A_\Gamma$, we can permute the generators. However, not all permutations give automorphisms, but only those that correspond to automorphisms of the graph $\Gamma$. We call these $\Gamma$-legal permutations. Due to the fact that these automorphisms of $A_\Gamma$ are related to graph automorphisms of $\Gamma$, $\Gamma$-legal permutations are often also called ``graph automorphisms''.
    \item \textit{Inversions:} For $j \in \{1,\dots,n\}$, we define the inversion $I_j$ as follows: \mbox{$v_j \mapsto v_j^{-1}$} and $v_k \mapsto v_k$ for $k \neq j$.
\end{itemize}
\begin{rem}
In a general setting, we write $T^l_{v_i,v_j}$ instead of $T^l_{i,j}$. The same applies to the notation for the other types of generators.
\end{rem}
We use the notation from above also for the images of the generators in $\Out(A_\Gamma)$. These elements then generate $\Out(A_\Gamma)$.

\subsection{Pure symmetric outer automorphisms}
The following is based on \cite[Section 2.2]{DayWade}. An automorphism $\phi$ of $A_\Gamma$ is called pure symmetric if every generator $v_i$ is mapped to a conjugate of itself. Note that the conjugating element may depend on $v_i$. The set of such elements forms a subgroup of $\Aut(A_\Gamma)$ as the composition of two pure symmetric automorphisms is again pure symmetric. This subgroup is called the group of pure symmetric automorphisms and denoted by $\PSA(A_\Gamma)$. A generating set for it is the set of partial conjugations; see \mbox{\cite[Theorem 2.5]{DayWade}}. We define $\PSO(A_\Gamma)$, the group of pure symmetric outer automorphisms, as the image of $\PSA(A_\Gamma)$ in $\Out(A_\Gamma)$.\par
Our goal is to first find conditions for when $\PSO(A_\Gamma)$ has finite-index in $\Out(A_\Gamma)$ and for when it is a right-angled Artin group. Then we show in Section \ref{chap:construction} that every right-angled Artin group $A_\Lambda$ occurs as $\PSO(A_\Gamma)$ for some graph $\Gamma$ for which $\PSO(A_\Gamma)$ has finite index in $\Out(A_\Gamma)$.

\subsubsection{Finite-index condition for \texorpdfstring{$\PSO(A_\Gamma)$}{PSO(RAAG)}}\label{sec:finiteindexconditionforPSO(RAAG)}
The following theorem is based on \cite[Appendix A]{WadeBrück}.
\begin{thm}\label{thm:condforfiniteindex}
Let $\Gamma$ be a graph. Then the condition
\begin{equation}\label{eq:conditionforfiniteindex}
    \forall v,w \in V(\Gamma): \: \lk(v) \subseteq \st(w) \Longrightarrow v = w
\end{equation}
is equivalent to the group $\PSO(A_\Gamma)$ having finite index in $\Out(A_\Gamma)$.
\end{thm}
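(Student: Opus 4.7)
The plan is to prove the two implications separately, exploiting the same key observation for both: partial conjugations act trivially on the abelianisation $A_\Gamma^{\mathrm{ab}} \cong \Z^n$ and preserve each conjugacy class $[v_i]$, whereas any non-trivial transvection does neither.

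For the direction ``finite index $\Longrightarrow$ \eqref{eq:conditionforfiniteindex}'', I argue by contrapositive. Suppose \eqref{eq:conditionforfiniteindex} fails and pick $v \neq w$ with $\lk(v) \subseteq \st(w)$; then the transvection $T^l_{v,w}$ is $\Gamma$-legal and thus defines a non-trivial element of $\Out(A_\Gamma)$. Consider the abelianisation homomorphism $\rho \colon \Out(A_\Gamma) \to \GL_n(\Z)$. Partial conjugations act trivially on $A_\Gamma^{\mathrm{ab}}$, so $\PSO(A_\Gamma) \subseteq \ker(\rho)$, and consequently $[\Out(A_\Gamma) : \PSO(A_\Gamma)] \geq |\rho(\Out(A_\Gamma))|$. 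Since $\rho(T^l_{v,w})$ is an elementary matrix of infinite order in $\GL_n(\Z)$, the right-hand side is infinite, so $\PSO(A_\Gamma)$ has infinite index.

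For the forward direction, assume \eqref{eq:conditionforfiniteindex}. Then no non-trivial transvection exists: the defining hypothesis $\lk(v_i) \subseteq \st(v_j)$ of $T^l_{i,j}$ or $T^r_{i,j}$ forces $i=j$, which does not produce an automorphism. By the Servatius--Laurence generating set recalled in Subsection \ref{sec:generators}, $\Out(A_\Gamma)$ is then generated by partial conjugations, inversions and graph automorphisms. Let $X \de \{[v_i], [v_i^{-1}] : 1 \leq i \leq n\}$ be the finite set of conjugacy classes of generators and their inverses. I define a homomorphism $\alpha \colon \Out(A_\Gamma) \to \operatorname{Sym}(X)$ via the natural action on conjugacy classes. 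Each of the surviving three generator types sends $X$ to $X$: partial conjugations fix each $[v_i^{\pm 1}]$, inversions swap $[v_i]$ with $[v_i^{-1}]$, and graph automorphisms permute indices. So $\alpha$ is well-defined with finite image, and it remains to show $\ker(\alpha) = \PSO(A_\Gamma)$. The inclusion $\PSO(A_\Gamma) \subseteq \ker(\alpha)$ is clear. Conversely, if $\phi \in \ker(\alpha)$ admits a lift $\tilde\phi \in \Aut(A_\Gamma)$, then $\tilde\phi(v_i)$ lies in the conjugacy class of $v_i$ for each $i$, so $\tilde\phi(v_i) = g_i v_i g_i^{-1}$ for some $g_i \in A_\Gamma$; hence $\tilde\phi \in \PSA(A_\Gamma)$ and $\phi \in \PSO(A_\Gamma)$. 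Thus $\Out(A_\Gamma)/\PSO(A_\Gamma)$ embeds into the finite group $\operatorname{Sym}(X)$, as desired.

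The crux of the argument, and the place where \eqref{eq:conditionforfiniteindex} is genuinely used, is showing that $\alpha$ is actually defined on all of $\Out(A_\Gamma)$: a non-trivial transvection $T^l_{v_i,v_j}$ would send $[v_i]$ to $[v_j v_i]$, a class that typically does not lie in $X$, so it would not preserve $X$ and no such action would exist. Thus the whole proof pivots on the fact that \eqref{eq:conditionforfiniteindex} is precisely the obstruction to non-trivial transvections appearing in the generating set, while everything else is bookkeeping with the abelianisation map on one side and the permutation action on conjugacy classes on the other.
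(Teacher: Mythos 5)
Your proposal is correct, but it takes a route that differs from the paper's (which follows Wade--Br\"uck, Appendix A) in the forward direction. The paper's argument identifies the quotient exactly, showing $\Out(A_\Gamma)/\PSO(A_\Gamma) \cong \Inv \rtimes \Per$ when condition \eqref{eq:conditionforfiniteindex} holds; you instead build the permutation representation $\alpha\colon \Out(A_\Gamma) \to \operatorname{Sym}(X)$ on the $2n$ conjugacy classes $[v_i^{\pm 1}]$, observe that in the absence of $\Gamma$-legal transvections every Laurence--Servatius generator preserves $X$, and identify $\ker(\alpha)$ with $\PSO(A_\Gamma)$ (the kernel computation is fine: lifts of an outer class differ by inner automorphisms, so fixing each class $[v_i]$ forces any lift to be pure symmetric). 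This is more elementary and self-contained, and as a by-product it shows $\PSO(A_\Gamma)$ is normal in $\Out(A_\Gamma)$ under \eqref{eq:conditionforfiniteindex}; what it does not give is the precise structure of the quotient, which the paper's version provides and which is used later (Corollary \ref{cor:structureofquotient} rests on $\Out(A_{\Gamma'})/\PSO(A_{\Gamma'}) \cong \Inv \rtimes \Per$), and your index bound $|{\operatorname{Sym}(X)}| = (2n)!$ is accordingly much cruder than the true $2^n\,|\Aut(\Gamma)|$. For the converse, the paper only sketches that a $\Gamma$-legal transvection forces an infinite quotient; your implementation via the abelianisation $\rho\colon \Out(A_\Gamma) \to \GL_n(\Z)$, with $\PSO(A_\Gamma) \subseteq \ker(\rho)$ and $\rho(T^l_{v,w})$ an infinite-order elementary matrix, is a clean and correct way to realise exactly that step.
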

Note that condition \eqref{eq:conditionforfiniteindex} is equivalent to the group $A_\Gamma$ having no $\Gamma$-legal transvections. The idea for the proof that condition \eqref{eq:conditionforfiniteindex} implies finite index is to show that
\[\Out(A_\Gamma)/\PSO(A_\Gamma) \cong \Inv \rtimes \Per,\]
where $\Inv$ and $\Per$ denote the subgroups of $\Out(A_\Gamma)$ generated by the inversions respectively by the $\Gamma$-legal permutations. For the other direction, one can show that the existence of a $\Gamma$-legal transvection implies that the quotient has infinitely many elements.

\subsubsection{Condition for when \texorpdfstring{$\PSO(A_\Gamma)$}{PSO(RAAG)} is a right-angled Artin group}
In this subsection, we state a condition about when $\PSO(A_\Gamma)$ is a right-angled Artin group. This condition was developed and proved by Day and Wade in \cite{DayWade}. This is also the source for this subsection, in particular \cite[Chapters 2 and 5]{DayWade}. We first need the following two definitions.
\begin{defn}
Let $\Gamma$ be a graph and $v \neq w \in V(\Gamma)$. The pair $(v,w)$ is called a separating intersection of links if $v$ is not adjacent to $w$, i.e. $\{v,w\} \notin E(\Gamma)$, and $\Gamma - (\lk(v) \cap \lk(w))$ has a component that contains neither $v$ nor $w$.
\end{defn}
We often use the formulation ``$(v,w)$ is a SIL-pair'' for a pair $(v,w)$ that is a separating intersection of links.
\begin{defn}\label{def:suppgraph}
Let $\Gamma$ be a graph and $v \in V(\Gamma)$ be a vertex of $\Gamma$. We define the support graph $\sg{\Gamma}{v}$ as follows. For every component $C$ of $\Gamma - \st(v)$ there is a vertex in $\sg{\Gamma}{v}$. Two vertices $A$ and $B$ in $\sg{\Gamma}{v}$ are connected by an edge if there is a vertex $b \in B$ such that $A$ is also a component of $\Gamma - \st(b)$.
\end{defn}
\begin{rem}
Note that the definition above is not symmetric. That is, for two components $A$ and $B$ of $\Gamma - \st(v)$ the fact that there is a $b \in B$ such that $A$ is also a component of $\Gamma - \st(b)$ is not equivalent to the fact that there is an $a \in A$ such that $B$ is also a component of $\Gamma - \st(a)$. There is an edge between $A$ and $B$ in the support graph if any of these two conditions holds.
\end{rem}
\begin{rem}
As in Definition \ref{def:suppgraph}, we often use the same symbol for the vertices in the support graph $\sg{\Gamma}{v}$ and the components of $\Gamma - \st(v)$, even though these are not the same.
\end{rem}
For $b \in \Gamma - \st(v)$, we often write the component of $\Gamma - \st(v)$ that contains $b$ as $[b]_v$. So, equivalently there is an edge between two vertices $A$ and $B$ in the support graph $\sg{\Gamma}{v}$ if there is a $b \in \Gamma - \st(v)$ such that $B = [b]_v$ and $A$ is also a component of $\Gamma - \st(b)$.\par
We now define the following graph $\Theta$ (depending on $\Gamma$). This graph has two types of vertices. We call them vertices of type \rom{1} and \rom{2}. More precisely, for every vertex $v \in V(\Gamma)$ we have the following vertices.
\begin{itemize}
    \item Vertices of type \rom{1}: For every edge $e$ in $\sg{\Gamma}{v}$ we have a vertex $\alpha_e^v$.
    \item Vertices of type \rom{2}: We also have a vertex for every component of $\sg{\Gamma}{v}$ except one, i.e. we have vertices $\beta_1^v, \dots, \beta_{\min(N(v)-1,0)}^v$, where $N(v)$ is the number of components of $\sg{\Gamma}{v}$. We take the minimum with 0 to avoid the special case $N(v) = 0$. This happens if $\Gamma - \st(v)$ has no vertices and thus also $\sg{\Gamma}{v}$ has no vertices.
\end{itemize}
Concerning the edges, vertices of type \rom{2} are connected to all other vertices and vertices $\alpha_e^v$ and $\alpha_f^w$ of type \rom{1} are connected except when $(v,w)$ is a SIL-pair and the edges are of the form $e = \{[w]_v,L\}$ and $f = \{[v]_w,L\}$, where $L$ is a component of both $\Gamma - \st(v)$ and $\Gamma - \st(w)$.\par
We now state the theorem about when $\PSO(A_\Gamma)$ is a right-angled Artin group.
\begin{thm}[{\cite[Theorem 5.12]{DayWade}}]\label{thm:condforpsobeingaraag}
Let $\Gamma$ be a graph. Then $\PSO(A_\Gamma)$ is a right-angled Artin group if and only if all support graphs of $\Gamma$ are forests. In this case, $\PSO(A_\Gamma) \cong A_\Theta$ for $\Theta$ as defined above.
\end{thm}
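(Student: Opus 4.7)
My plan is to analyze $\PSO(A_\Gamma)$ via a presentation in terms of the partial conjugation generators of $\PSA(A_\Gamma)$ and compare it with the standard presentation of $A_\Theta$.

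First, I would recall that $\PSA(A_\Gamma)$ is generated by the partial conjugations $P_v^A$, so $\PSO(A_\Gamma)$ is generated by their images modulo inner automorphisms. The key reduction is that passing from $\Aut$ to $\Out$ introduces one new relation per vertex: for each $v \in V(\Gamma)$, the product $\prod_A P_v^A$ taken over all components of $\Gamma - \st(v)$ equals $\mathrm{conj}(v)$ and hence becomes trivial in $\Out$. This already explains why, in $\Theta$, one component of each $\sg{\Gamma}{v}$ is omitted. I would then list the commutation relations that hold amongst the $P_v^A$. Two partial conjugations $P_v^A$ and $P_w^B$ commute in $\Aut(A_\Gamma)$ except in a very rigid geometric configuration; the only obstruction surviving in $\Out(A_\Gamma)$ is when $(v,w)$ is a SIL-pair and the two components $A$ and $B$ "share" a common component $L$ in the sense of the definition of $\Theta$. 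This matches exactly the edge rule for the type~\rom{1} vertices $\alpha_e^v$ and $\alpha_f^w$.

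The heart of the forward direction will be to show that, when all $\sg{\Gamma}{v}$ are forests, one can reparametrise the generators $P_v^A$ by the vertices of $\Theta$: each edge of $\sg{\Gamma}{v}$ gives a free parameter (namely $\alpha_e^v$), and each component of $\sg{\Gamma}{v}$ contributes one further free parameter except for one component, which is eaten by the relation $\prod_A P_v^A = 1$ (these become the $\beta_i^v$). The edges of the support graph record precisely when, for $b \in B$, the component $A$ is also a component of $\Gamma-\st(b)$, and in a forest these identifications are independent. I would then verify that the commutation relations between the repackaged generators coincide with those of $A_\Theta$: type \rom{2} generators commute with everything, and two type \rom{1} generators commute exactly when they do not come from a SIL-pair of the specified form.

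For the converse, I would argue that a cycle in some support graph $\sg{\Gamma}{v}$ produces a non-trivial relation in $\PSO(A_\Gamma)$ that cannot be expressed as a product of commutators of generators, so no RAAG presentation is possible. Concretely, following the cycle gives an identification between partial conjugations along its edges which, after cancellation around the loop, becomes an extra relator beyond those in the would-be $A_\Theta$. I expect this converse to be the main obstacle: producing the cycle relation is straightforward, but showing that it is \emph{genuinely} non-trivial in $\PSO(A_\Gamma)$ and \emph{not} a consequence of the commutator relations requires either an explicit abelianisation or rank computation, or a more delicate normal form argument on partial conjugations. This is where I would expect the Day--Wade proof to require the most careful bookkeeping.
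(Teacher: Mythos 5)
This theorem is not proved in the present paper at all: it is imported verbatim from Day--Wade \cite{DayWade}, so your sketch has to be measured against their argument rather than against anything in this text. Your outline of the ``if'' direction is broadly consistent with what they do: the partial conjugations generate $\PSA(A_\Gamma)$, in $\Out(A_\Gamma)$ one kills, for each vertex $v$, the product $\prod_A P_v^A$ over all components of $\Gamma - \st(v)$ (which is conjugation by $v$), and the vertices $\alpha_e^v$, $\beta_i^v$ of $\Theta$ correspond to explicit products of partial conjugations, with the SIL/shared-component condition governing non-commutation (this is their Definition 5.4, Section 5.1 and Proposition 5.5, as the paper itself points out). But even here the sketch is thinner than it looks: the known presentations of $\PSA(A_\Gamma)$ contain McCool-type relations that are not commutators of pairs of your chosen generators, and the real content of the ``if'' direction is the injectivity of the induced map $A_\Theta \to \PSO(A_\Gamma)$, which does not follow from merely matching commutation relations among a generating set; Day--Wade have to work for this, and your plan does not indicate how.

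The genuine gap is the converse. You propose to extract from a cycle in some support graph $\sg{\Gamma}{v}$ an ``extra relator beyond those in the would-be $A_\Theta$'' and to conclude that no RAAG presentation is possible, certifying non-triviality by abelianisation, a rank count, or a normal form on partial conjugations. This cannot work as stated: being a right-angled Artin group is a property of the abstract group, not of a presentation, so an unexpected relation among your particular generators only shows that this generating set is not a RAAG basis; the group could still be a RAAG with respect to an entirely different generating set. Abelianisation and rank computations are far too coarse to exclude this (they detect at most the number of vertices of a hypothetical defining graph), and a normal-form argument tied to the partial-conjugation generators is again presentation-dependent. What is needed, and what Day--Wade actually use, is an isomorphism invariant whose shape is known to be constrained for RAAGs: they work with the BNS invariant $\Sigma^1$, computing enough of $\Sigma^1(\PSO(A_\Gamma))$ and comparing it with the Meier--VanWyk description of BNS invariants of right-angled Artin groups to rule out any RAAG structure when some support graph contains a cycle. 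Without an invariant of this kind, your converse does not go through, so the proposal as written proves only the easier half of the statement.
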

\begin{rem}
Note that in this theorem, in contrast to Definition \ref{def:raag}, we also treat the group $\{\id\} = A_\emptyset$ as a right-angled Artin group, where $\emptyset$ denotes the graph with no vertices. Namely, $\Theta$ is the empty graph if all support graphs have at most one vertex. This happens for example when $\Gamma$ is a complete graph. Note that it makes sense that $\PSO(A_\Gamma)$ is the trivial group in this case since when $\Gamma$ is complete, then $A_\Gamma$ is $\Z^{|V(\Gamma)|}$. Thus, conjugation by any element does nothing, so there are no pure symmetric outer automorphisms except the identity.
\end{rem}
In \cite[Chapter 5]{DayWade}, one can find an explicit construction of the isomorphism between the right-angled Artin group $A_\Theta$ and the group $\PSO(A_\Gamma)$ in case all support graphs of $\Gamma$ are forests. They define the vertices of type \rom{2} more precisely in \cite[Definition 5.4]{DayWade}, describe the needed generators of $\PSO(A_\Gamma)$ in \cite[Section 5.1]{DayWade} and give the correspondence between the generators of $A_\Theta$ and the generators of $\PSO(A_\Gamma)$ in \cite[Proposition 5.5]{DayWade}.

\section{Right-angled Artin groups as finite-index subgroups of \texorpdfstring{$\Out(A_\Gamma)$}{Out(RAAG)}}\label{chap:construction}
In this section, we use Theorems \ref{thm:condforfiniteindex} and \ref{thm:condforpsobeingaraag} to show that every right-angled Artin group is a finite-index subgroup of the outer automorphism group of some other right-angled Artin group. This is Theorem \ref{thm:anyraagisfinindsubgrofoutraagintro}, which we here restate as Theorem \ref{thm:anyraagisfinindsubgrofoutraag}.
\begin{thm}\label{thm:anyraagisfinindsubgrofoutraag}
For any graph $\Lambda$, there is a graph $\Gamma = \Gamma(\Lambda)$ such that
\begin{enumerate}
    \item $A_\Lambda \cong \PSO(A_\Gamma)$ and
    \item $\PSO(A_\Gamma)$ has finite index in $\Out(A_\Gamma)$.
\end{enumerate}
\end{thm}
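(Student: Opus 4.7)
The plan is to exhibit, for each graph $\Lambda$, an explicit graph $\Gamma = \Gamma(\Lambda)$ and then deduce (i) from Theorem~\ref{thm:condforpsobeingaraag} and (ii) from Theorem~\ref{thm:condforfiniteindex}.

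The natural construction is gadget-based: to each vertex $\lambda \in V(\Lambda)$ I would associate a small fixed gadget in $\Gamma$ containing a distinguished vertex $v_\lambda$, designed so that $\sg{\Gamma}{v_\lambda}$ is a single edge (hence a tree), contributing exactly one type~\rom{1} vertex $\alpha_{e_\lambda}^{v_\lambda}$ to $\Theta$. The gadgets are then glued together so that the adjacency of $v_\lambda$ and $v_\mu$ in $\Gamma$ mirrors the adjacency $\{\lambda,\mu\} \in E(\Lambda)$: when $\lambda$ and $\mu$ are adjacent in $\Lambda$, the pair $(v_\lambda, v_\mu)$ should fail to be a SIL-pair and hence $\alpha_{e_\lambda}^{v_\lambda}$ and $\alpha_{e_\mu}^{v_\mu}$ get joined in $\Theta$; when $\lambda$ and $\mu$ are non-adjacent, the gadgets are arranged so that $(v_\lambda, v_\mu)$ is a SIL-pair realising precisely the forbidden edge configuration $e_\lambda = \{[v_\mu]_{v_\lambda}, L\}$, $e_\mu = \{[v_\lambda]_{v_\mu}, L\}$ from the definition of $\Theta$, so that the corresponding type~\rom{1} vertices are not joined.

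The verification would proceed in four checks. First, I would verify that every support graph $\sg{\Gamma}{v}$ is a forest; by design these should be trees of low complexity (a single edge for $v = v_\lambda$ and trivial or simple support graphs for the auxiliary vertices of each gadget), so Theorem~\ref{thm:condforpsobeingaraag} gives $\PSO(A_\Gamma) \cong A_\Theta$ with $\Theta = \Theta(\Gamma)$ as defined above. Second, I would check that no type~\rom{2} vertex appears in $\Theta$: each $\sg{\Gamma}{v}$ must have at most one connected component, so that no $\beta_i^v$ is produced. Third, reading off the type~\rom{1} vertices and edges from the construction, I would identify $\Theta \cong \Lambda$ via the bijection $\lambda \mapsto \alpha_{e_\lambda}^{v_\lambda}$, which yields~(i). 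Fourth, I would verify condition~\eqref{eq:conditionforfiniteindex}: for every pair of distinct vertices $v, w$ of $\Gamma$ one needs $\lk(v) \not\subseteq \st(w)$, which by Theorem~\ref{thm:condforfiniteindex} gives~(ii).

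The main obstacle I expect is the joint tuning of these three conditions. Killing a potential $\Gamma$-legal transvection typically demands extra edges between gadgets (to ensure some neighbour of $v$ escapes $\st(w)$), yet such edges can easily break the SIL configuration at pairs $(v_\lambda, v_\mu)$ with $\lambda \not\sim_\Lambda \mu$, enlarge some $\sg{\Gamma}{v}$ beyond a forest, or introduce spurious type~\rom{1} vertices in $\Theta$. The heart of the proof is therefore a careful combinatorial design of the inter-gadget edges that gets all three requirements right simultaneously. For graphs $\Lambda$ with very few vertices the gadget-based construction may fail because there is not enough room to block every transvection while preserving the SIL structure; those degenerate cases are presumably what the paper defers to Appendix~\ref{app:smallgraphs}, and I would handle them separately by small ad hoc constructions.
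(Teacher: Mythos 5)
Your overall strategy is the same as the paper's: attach auxiliary vertices to $\Lambda$ to form $\Gamma$, identify $\PSO(A_\Gamma)$ with $A_\Theta$ via Theorem \ref{thm:condforpsobeingaraag}, arrange $\Theta \cong \Lambda$ by making non-adjacency in $\Lambda$ correspond to the forbidden SIL configuration, get finite index from Theorem \ref{thm:condforfiniteindex}, and defer very small $\Lambda$ to ad hoc examples. However, as written this is a specification of what a construction should achieve, not a proof: no explicit graph $\Gamma(\Lambda)$ is exhibited, and none of your four checks is actually carried out. You yourself identify the ``careful combinatorial design of the inter-gadget edges'' as the expected obstacle, but that design and its verification (in the paper: the graph of Figure \ref{fig:construction} with the vertices $a_1,a_2,b_1,b_2,c_1,\dots,c_n,d_1,d_2$, the support-graph analysis in Lemma \ref{lem:PSOAGammaisALambda}, and the link/star table of Lemma \ref{lem:PSOAGammahasfiniteindex}) \emph{is} the proof; what remains in your proposal is the surrounding frame.

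There is also a concrete flaw in the design constraints you impose. You want every $\lambda\in V(\Lambda)$ to contribute a type \rom{1} vertex, i.e.\ $\sg{\Gamma}{v_\lambda}$ should contain an edge, while no type \rom{2} vertices occur and the auxiliary vertices contribute nothing to $\Theta$. An edge of $\sg{\Gamma}{v_\lambda}$ requires a witness $b \notin \st(v_\lambda)$ such that some component $A$ of $\Gamma - \st(v_\lambda)$ is also a component of $\Gamma - \st(b)$. But then $\Gamma - \st(b)$ is disconnected (it contains $A$ and, in a different component, $v_\lambda$), so $\sg{\Gamma}{b}$ has at least two vertices and $b$ itself contributes a type \rom{1} or type \rom{2} vertex to $\Theta$. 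Hence the witness cannot be an auxiliary vertex and must be some non-neighbour $v_\mu$ of $v_\lambda$. If $\lambda$ is adjacent to all other vertices of $\Lambda$ (e.g.\ $\Lambda$ complete, or a cone --- cases that occur for arbitrarily large $\Lambda$, so they cannot be shunted into the small-graph appendix), no such $v_\mu$ exists: either $\sg{\Gamma}{v_\lambda}$ has no edge, or $\Theta$ acquires a spurious vertex, and in both cases your intended bijection $\lambda \mapsto \alpha_{e_\lambda}^{v_\lambda}$ breaks down. The paper handles exactly this by letting such dominating vertices be represented by type \rom{2} vertices $\beta_1^{v_i}$ (support graph: two vertices, no edge), which are joined to everything in $\Theta$ just as a dominating vertex is in $\Lambda$, while all other $v_i$ give type \rom{1} vertices whose support-graph edge is witnessed by a non-neighbour $v_j$, with $\{a_1,a_2\}$ serving as the common component $L$ in every SIL configuration. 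Your plan needs this (or an equivalent) modification before the construction and its verification could go through.
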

\begin{rem}
We first want to comment on how this theorem and also Theorem \ref{thm:anyraagisfinindsubgrofoutraagnographauto} below were developed. We used computer programs that can be found on \cite{Code}. Using these, we found out that for any graph with at most four vertices the corresponding right-angled Artin group occurs as $\PSO(A_\Gamma)$ for some graph $\Gamma$. We then generalised these examples step by step until we arrived at the constructions given below. These computer programs are based on and contain parts of programs written by Benjamin Brück, which can be found on \cite{Code:Benjamin} and were used for \cite{WadeBrück}.
\end{rem}
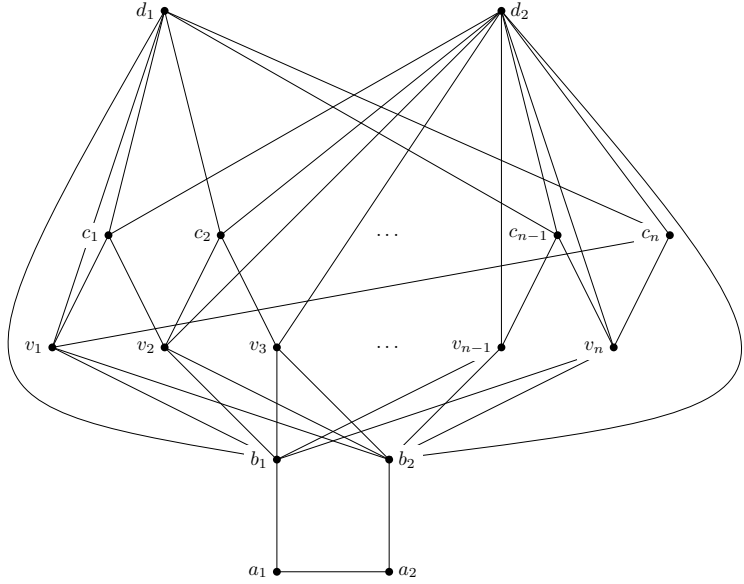
\begin{figure}[ht]
    \centering
    \resizebox{\textwidth}{!}{
    \centering
    \begin{tikzpicture}
    
    \coordinate (v1) at (0,0);
    \coordinate (v2) at (2,0);
    \coordinate (v3) at (4,0);
    \coordinate[label=center:\dots] (Dots1) at (6,0);
    \coordinate (vn-1) at (8,0);
    \coordinate (vn) at (10,0);
    \coordinate (C1) at (1,2);
    \coordinate (C2) at (3,2);
    \coordinate[label=center:\dots] (Dots2) at (6,2);
    \coordinate (Cn-1) at (9,2);
    \coordinate (Cn) at (11,2);
    \coordinate[label=left: $d_1$] (D1) at (2,6);
    \coordinate[label=right: $d_2$] (D2) at (8,6);
    \coordinate (B1) at (4,-2);
    \coordinate (B2) at (6,-2);
    \coordinate[label=left: $a_1$] (A1) at (4,-4);
    \coordinate[label=right: $a_2$] (A2) at (6,-4);

    \node at (D1) [left = 0.4mm of D1, fill=white] {$d_1$};
    \node at (D2) [right = 0.4mm of D2, fill=white] {$d_2$};

    \draw (v1) -- (D1);
    \draw (v1) -- (Cn);
    \draw (vn-1) -- (B1);
    \draw[] (B1) .. controls(-2,-1) .. (D1);
    \draw (vn) -- (B1);
    \draw (vn) -- (B2);
    \draw[] (B2) .. controls(14,-1) .. (D2);
    \draw (v1) -- (B1);
    \draw (Cn) -- (D1);

    \node at (vn-1) [left = 0.4mm of vn-1, fill=white] {$v_{n-1}$};

    \draw (vn-1) -- (B2);

    \node at (v1) [left = 0.4mm of v1, fill=white] {$v_1$};
    \node at (v2) [left = 0.4mm of v2, fill=white] {$v_2$};
    \node at (v3) [left = 0.4mm of v3, fill=white] {$v_3$};
    \node at (vn) [left = 0.4mm of vn, fill=white] {$v_n$};
    \node at (C1) [left = 0.4mm of C1, fill=white] {$c_1$};
    \node at (C2) [left = 0.4mm of C2, fill=white] {$c_2$};
    \node at (Cn-1) [left = 0.4mm of Cn-1, fill=white] {$c_{n-1}$};
    \node at (Cn) [left = 0.4mm of Cn, fill=white] {$c_{n}$};
    \node at (B1) [left = 0.4mm of B1, fill=white] {$b_1$};
    \node at (B2) [right = 0.4mm of B2, fill = white] {$b_2$};
    \node at (A1) [left = 0.4mm of A1, fill=white] {$a_1$};
    \node at (A2) [right = 0.4mm of A2, fill = white] {$a_2$};

    \draw (v1) -- (C1);
    \draw (v2) -- (C1);
    \draw (v2) -- (C2);
    \draw (v3) -- (C2);
    \draw (vn-1) -- (Cn-1);
    \draw (vn) -- (Cn-1);
    \draw (vn) -- (Cn);
    \draw (v2) -- (D2);
    \draw (v3) -- (D2);
    \draw (vn-1) -- (D2);
    \draw (vn) -- (D2);
    \draw (C1) -- (D1);
    \draw (C2) -- (D1);
    \draw (Cn-1) -- (D1);
    \draw (C1) -- (D2);
    \draw (C2) -- (D2);
    \draw (Cn-1) -- (D2);
    \draw (Cn) -- (D2);
    \draw (v2) -- (B1);
    \draw (v3) -- (B1);
    \draw (v1) -- (B2);
    \draw (v2) -- (B2);
    \draw (v3) -- (B2);
    \draw (B1) -- (A1);
    \draw (B2) -- (A2);
    \draw (A1) -- (A2);

    \fill (v1) circle (2pt);
    \fill (v2) circle (2pt);
    \fill (v3) circle (2pt);
    \fill (vn-1) circle (2pt);
    \fill (vn) circle (2pt);
    \fill (C1) circle (2pt);
    \fill (C2) circle (2pt);
    \fill (Cn-1) circle (2pt);
    \fill (Cn) circle (2pt);
    \fill (D1) circle (2pt);
    \fill (D2) circle (2pt);
    \fill (B1) circle (2pt);
    \fill (B2) circle (2pt);
    \fill (A1) circle (2pt);
    \fill (A2) circle (2pt);
    \end{tikzpicture}
    }
    \caption{Construction of $\Gamma$}
    \label{fig:construction}
\end{figure}
We assume without loss of generality that \mbox{$V(\Lambda) = \{v_1, \dots, v_n\}$}. We also assume $n \geq 3$. The other cases are covered in Appendix \ref{app:smallgraphs}. For a given graph $\Lambda$, we define the graph $\Gamma = \Gamma(\Lambda)$ as follows. We obtain $\Gamma$ from $\Lambda$ by adding vertices $a_1$, $a_2$, $b_1$, $b_2$, $c_1$, \dots, $c_n$, $d_1$ and $d_2$ and the edges depicted in Figure \ref{fig:construction}: The vertex $d_1$ is connected to $b_1$, to all vertices $c_i$ and to the vertex $v_1$, but not to the vertices $v_j$ for $j>1$. The vertex $d_2$ is connected to $b_2$, all $c_i$ and all $v_j$ except $v_1$. The vertices $c_i$ are connected to $d_1$, $d_2$ and the vertices $v_i$ and $v_{i+1}$, where $v_{n+1} \coloneqq v_1$. In addition to the already defined edges, the vertices $v_j$ are connected to $b_1$ and $b_2$. Furthermore, we have the edges $\{b_1, a_1\}$, $\{b_2, a_2\}$ and $\{a_1, a_2\}$. Finally, if there are edges in $\Lambda$, these are also present in $\Gamma$. But as we work with an arbitrary graph $\Lambda$, we did not draw them in Figure \ref{fig:construction}.\par
We want to prove the following two lemmas, which together imply Theorem \ref{thm:anyraagisfinindsubgrofoutraag}.\hspace*{-0.1cm}
\begin{lem}\label{lem:PSOAGammaisALambda}
For this graph $\Gamma = \Gamma(\Lambda)$, we have that $\PSO(A_\Gamma)$ is isomorphic to $A_\Lambda$.
\end{lem}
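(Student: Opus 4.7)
The plan is to apply Theorem \ref{thm:condforpsobeingaraag} of Day--Wade: I will show all support graphs $S_w^\Gamma$ are forests and then identify the resulting graph $\Theta$ with $\Lambda$. The whole proof is a careful book-keeping exercise, driven by one structural observation: the pair $\{a_1,a_2\}$ is joined to the rest of $\Gamma$ only through $b_1$ and $b_2$, and the only vertices of $\Gamma$ whose star contains both $b_1$ and $b_2$ are $v_1,\dots,v_n$.

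First I would compute $\Gamma-\st(w)$ for each $w\in V(\Gamma)$. For every $v_i$, $\st(v_i)$ contains both $b_1$ and $b_2$, so $\{a_1,a_2\}$ splits off as its own component; a separate check shows that everything else is connected through the ``backbone'' formed by the cycle $c_1,\dots,c_n$, the vertices $d_1,d_2$ and $b_1,b_2$, and the remaining $v_j$'s. Thus $S_{v_i}^\Gamma$ has exactly two vertices: $A=\{a_1,a_2\}$ and a large component $B_i$. For each auxiliary vertex $w\in\{a_1,a_2,b_1,b_2,c_1,\dots,c_n,d_1,d_2\}$ I would verify directly that $\Gamma-\st(w)$ is connected, again using the backbone; hence $S_w^\Gamma$ is a single vertex and contributes nothing to $\Theta$.

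Next I decide when there is an edge between $A$ and $B_i$ in $S_{v_i}^\Gamma$. By Definition \ref{def:suppgraph}, such an edge exists if and only if some vertex $b\in B_i$ satisfies $\{b_1,b_2\}\subseteq\st(b)$, i.e.\ if and only if some $v_k$ lies in $B_i$, which means $v_i$ has a non-neighbour in $\Lambda$. Either way $S_{v_i}^\Gamma$ is a forest, so Theorem \ref{thm:condforpsobeingaraag} applies and $\PSO(A_\Gamma)\cong A_\Theta$. I then analyse the SIL structure. For $v_i\not\sim_\Lambda v_j$, the intersection $\lk(v_i)\cap\lk(v_j)$ contains $b_1,b_2$, so $\{a_1,a_2\}$ remains a component of $\Gamma-(\lk(v_i)\cap\lk(v_j))$ containing neither $v_i$ nor $v_j$; this gives a SIL pair, and one verifies that the unique edge of $S_{v_i}^\Gamma$ is of the required form $\{[v_j]_{v_i},L\}$ with $L=\{a_1,a_2\}$. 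A short case analysis, again exploiting the backbone's connectivity, rules out SIL pairs involving any auxiliary vertex.

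Putting this together, each $v_i$ contributes exactly one vertex to $\Theta$: the type-\rom{1} vertex from the unique edge of $S_{v_i}^\Gamma$ when $v_i$ has a $\Lambda$-non-neighbour, and the type-\rom{2} vertex otherwise. The adjacency rules in the definition of $\Theta$ then match $\Lambda$ exactly. Type-\rom{2} vertices are adjacent to everything, which mirrors the fact that the corresponding $v_i$ is $\Lambda$-adjacent to every other $v_k$. Two type-\rom{1} vertices $\alpha^{v_i},\alpha^{v_j}$ are non-adjacent precisely under the SIL configuration identified above, and that happens precisely when $v_i\not\sim_\Lambda v_j$. Hence $\Theta\cong\Lambda$, and $\PSO(A_\Gamma)\cong A_\Theta\cong A_\Lambda$.

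The main obstacle I anticipate is the case-by-case check that no SIL pair involves an auxiliary vertex, together with verifying that $\Gamma-\st(w)$ is connected for every such $w$; both reduce to showing that removing a small neighbourhood from the $c$-cycle-plus-$d_1,d_2$-plus-$b_1,b_2$ backbone leaves the remainder connected, which is best handled by enumerating what can get disconnected from what. A secondary point is keeping the ``edge of the required form'' condition in the $\Theta$-adjacency rule under control, but this becomes automatic once $L=\{a_1,a_2\}$ is identified as the common component responsible for every SIL pair.
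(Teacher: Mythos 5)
Your proposal is correct and follows essentially the same route as the paper's proof: compute $\Gamma-\st(w)$ for all $w$, observe that only the $v_i$ give non-trivial (two-vertex) support graphs with $\{a_1,a_2\}$ as the small component, decide the type-\rom{1}/type-\rom{2} dichotomy by whether $v_i$ has a $\Lambda$-non-neighbour, and match the edges of $\Theta$ with those of $\Lambda$ via the SIL-pair condition with $L=\{a_1,a_2\}$, then invoke Theorem \ref{thm:condforpsobeingaraag}. The only divergence is cosmetic: your planned case analysis ruling out SIL pairs involving auxiliary vertices is unnecessary, since those vertices have single-vertex support graphs and hence contribute no vertices to $\Theta$ at all.
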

\begin{lem}\label{lem:PSOAGammahasfiniteindex}
For this graph $\Gamma = \Gamma(\Lambda)$, $\PSO(A_\Gamma)$ has finite index in $\Out(A_\Gamma)$.
\end{lem}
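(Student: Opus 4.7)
The plan is to invoke Theorem \ref{thm:condforfiniteindex}, which reduces the lemma to the purely combinatorial check that $\Gamma = \Gamma(\Lambda)$ admits no $\Gamma$-legal transvections, i.e.\ that for every pair of distinct vertices $v,w \in V(\Gamma)$ one has $\lk(v) \not\subseteq \st(w)$. I would first read off from Figure \ref{fig:construction} an explicit formula for $\lk(v)$ for each of the five types of vertex: $a_i$, $b_i$, $c_i$, $d_i$, and $v_j$. Note that edges inherited from $\Lambda$ only enlarge the links of the $v_j$, and enlarging $\lk(v)$ can only make the inclusion $\lk(v) \subseteq \st(w)$ harder to satisfy, so these inherited edges can be safely ignored throughout.

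Next, for each choice of $v$ I would successively eliminate candidate partners $w$ by intersecting stars of the most restrictive neighbours. For instance, $\lk(a_1) = \{a_2, b_1\}$ already forces $w \in \st(a_2) \cap \st(b_1)\setminus\{a_1\}$, which is empty; the case $a_2$ is symmetric. For $v = d_1$, the subset $\{c_1,\dots,c_n\} \subseteq \lk(d_1)$ forces $w = d_2$ (using $n \geq 3$, so no single $v_j$ or $c_k$ is adjacent to all $c_i$), and then $b_1 \in \lk(d_1)$ fails to lie in $\st(d_2)$; the case $d_2$ is analogous. For $v = c_i$, the pair $d_1,d_2 \in \lk(c_i)$ pins $w$ down to another $c_k$, and then $\{v_i, v_{i+1}\} \not\subseteq \st(c_k)$ for $k \neq i$. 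The cases $v = b_i$ reduce via $a_i \in \lk(b_i)$ to $w \in \{a_{3-i}\}$, which fails on $d_i$. Finally, for $v = v_j$, the triple $b_1, b_2, d_? \in \lk(v_j)$ forces $w$ to be some other $v_k$, and then $c_{j-1}, c_j \in \lk(v_j)$ force $k = j$, a contradiction.

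The argument is a finite case analysis, so the main obstacle is really bookkeeping: ensuring every vertex type is covered, keeping the link-versus-star distinction straight (so that $w$ itself may belong to $\st(w)$), and checking that the small-$n$ boundary is tight. The hypothesis $n \geq 3$ enters in two places: it ensures that the $c_i$'s are numerous enough that no single $v_j$ is adjacent to all of them, and it gives $v_{n-1} \neq v_1$ so that the $v_n$ case cannot collapse. Once the case check is complete, condition \eqref{eq:conditionforfiniteindex} is verified and Theorem \ref{thm:condforfiniteindex} yields Lemma \ref{lem:PSOAGammahasfiniteindex}.
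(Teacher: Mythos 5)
Your proposal is correct and follows essentially the same route as the paper: both invoke Theorem \ref{thm:condforfiniteindex} and verify condition \eqref{eq:conditionforfiniteindex} by a finite check of all pairs $(\lk(u),\st(w))$, which the paper organises as Table \ref{tab:lkusubseteqstv} with explicit witnesses and the same $n \geq 3$ special cases. One small caution: the edges inherited from $\Lambda$ also enlarge the stars $\st(v_k)$, so they cannot be ``ignored throughout'' when $w$ is a $\Lambda$-vertex; your argument still goes through because every membership test you use against such a $w$ (e.g.\ whether $c_{j-1},c_j \in \st(v_k)$) concerns adjacencies that edges of $\Lambda$ cannot affect.
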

\begin{proof}[Proof of Lemma \ref{lem:PSOAGammaisALambda}]
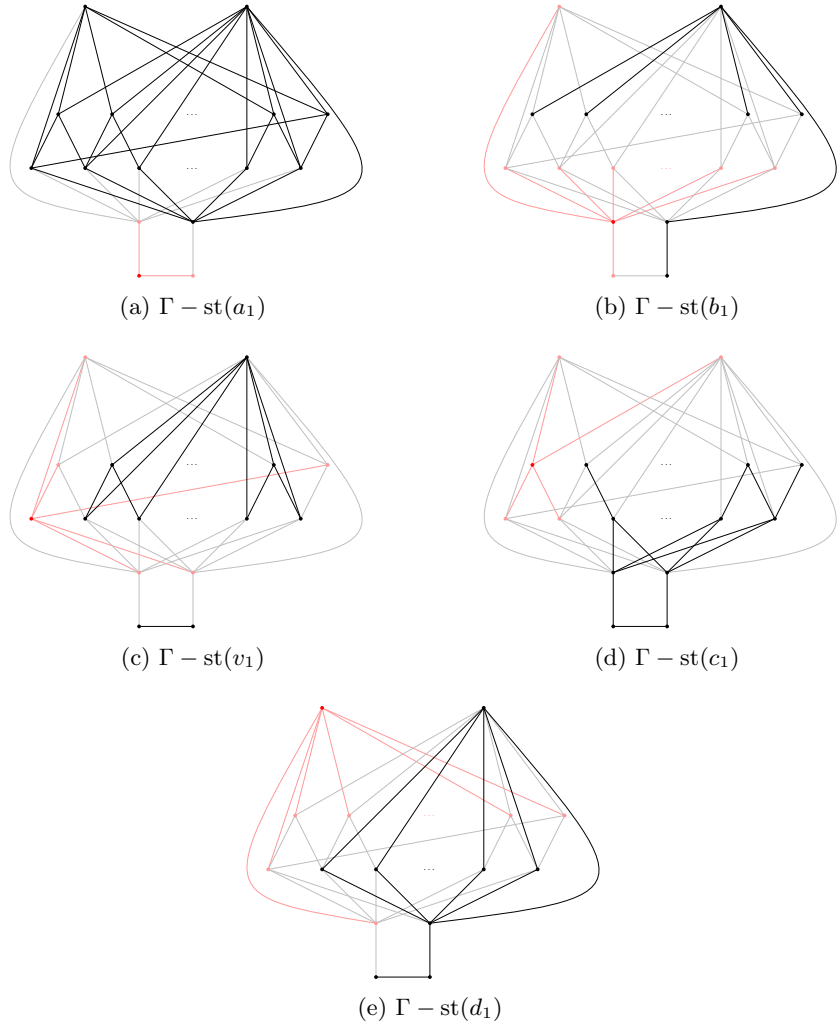
\begin{figure}
    \centering
    \begin{subfigure}[b]{0.48\textwidth}
        \centering
        \resizebox{\textwidth}{!}{
        \centering
        \begin{tikzpicture}
            \coordinate[] (v1) at (0,0);
            \coordinate[] (v2) at (2,0);
            \coordinate[] (v3) at (4,0);
            \coordinate[label=center:\dots] (Dots1) at (6,0);
            \coordinate[] (vn-1) at (8,0);
            \coordinate[] (vn) at (10,0);
            \coordinate[] (C1) at (1,2);
            \coordinate[] (C2) at (3,2);
            \coordinate[label=center:\dots] (Dots2) at (6,2);
            \coordinate[] (Cn-1) at (9,2);
            \coordinate[] (Cn) at (11,2);
            \coordinate[] (D1) at (2,6);
            \coordinate[] (D2) at (8,6);
            \coordinate[] (B1) at (4,-2);
            \coordinate[] (B2) at (6,-2);
            \coordinate[] (A1) at (4,-4);
            \coordinate[] (A2) at (6,-4);
            \draw[lightred] (B1) -- (A1);
            \draw[lightred] (A1) -- (A2);
            \draw[lightgray] (v1) -- (B1);
            \draw[lightgray] (v2) -- (B1);
            \draw[lightgray] (v3) -- (B1);
            \draw[lightgray] (vn-1) -- (B1);
            \draw[lightgray] (vn) -- (B1);
            \draw[lightgray] (B2) -- (A2);
            \draw[lightgray] (B1) .. controls(-2,-1) .. (D1);
            \draw (v1) -- (C1);
            \draw (v2) -- (C1);
            \draw (v2) -- (C2);
            \draw (v3) -- (C2);
            \draw (vn-1) -- (Cn-1);
            \draw (vn) -- (Cn-1);
            \draw (vn) -- (Cn);
            \draw (v1) -- (Cn);
            \draw (v1) -- (D1);
            \draw (vn) -- (D2);
            \draw (v2) -- (D2);
            \draw (v3) -- (D2);
            \draw (vn-1) -- (D2);
            \draw (C1) -- (D1);
            \draw (C2) -- (D1);
            \draw (Cn-1) -- (D1);
            \draw (Cn) -- (D1);
            \draw (C1) -- (D2);
            \draw (C2) -- (D2);
            \draw (Cn-1) -- (D2);
            \draw (Cn) -- (D2);
            \draw (v1) -- (B2);
            \draw (v2) -- (B2);
            \draw (v3) -- (B2);
            \draw (vn-1) -- (B2);
            \draw (vn) -- (B2);
            \draw[] (B2) .. controls(14,-1) .. (D2);
            \fill (v1) circle (2pt);
            \fill (v2) circle (2pt);
            \fill (v3) circle (2pt);
            \fill (vn-1) circle (2pt);
            \fill (vn) circle (2pt);
            \fill (C1) circle (2pt);
            \fill (C2) circle (2pt);
            \fill (Cn-1) circle (2pt);
            \fill (Cn) circle (2pt);
            \fill (D1) circle (2pt);
            \fill (D2) circle (2pt);
            \fill[lightred] (B1) circle (2pt);
            \fill (B2) circle (2pt);
            \fill[red] (A1) circle (2pt);
            \fill[lightred] (A2) circle (2pt);
        \end{tikzpicture}
        }
        \caption{$\Gamma - \st(a_1)$}
        \label{fig:GammaminusstA1}
    \end{subfigure}
    \hfill
    \begin{subfigure}[b]{0.48\textwidth}
        \centering
        \resizebox{\textwidth}{!}{
        \centering
        \begin{tikzpicture}
            \coordinate[] (v1) at (0,0);
            \coordinate[] (v2) at (2,0);
            \coordinate[] (v3) at (4,0);
            \coordinate[label=center:{\color{lightred}\dots}] (Dots1) at (6,0);
            \coordinate[] (vn-1) at (8,0);
            \coordinate[] (vn) at (10,0);
            \coordinate[] (C1) at (1,2);
            \coordinate[] (C2) at (3,2);
            \coordinate[label=center:\dots] (Dots2) at (6,2);
            \coordinate[] (Cn-1) at (9,2);
            \coordinate[] (Cn) at (11,2);
            \coordinate[] (D1) at (2,6);
            \coordinate[] (D2) at (8,6);
            \coordinate[] (B1) at (4,-2);
            \coordinate[] (B2) at (6,-2);
            \coordinate[] (A1) at (4,-4);
            \coordinate[] (A2) at (6,-4);
            \draw[lightred] (v1) -- (B1);
            \draw[lightred] (v2) -- (B1);
            \draw[lightred] (v3) -- (B1);
            \draw[lightred] (vn-1) -- (B1);
            \draw[lightred] (vn) -- (B1);
            \draw[lightred] (B1) -- (A1);
            \draw[lightred] (B1) .. controls(-2,-1) .. (D1);
            \draw[lightgray] (v1) -- (C1);
            \draw[lightgray] (v2) -- (C1);
            \draw[lightgray] (v2) -- (C2);
            \draw[lightgray] (v3) -- (C2);
            \draw[lightgray] (vn-1) -- (Cn-1);
            \draw[lightgray] (vn) -- (Cn-1);
            \draw[lightgray] (vn) -- (Cn);
            \draw[lightgray] (v1) -- (Cn);
            \draw[lightgray] (v1) -- (D1);
            \draw[lightgray] (vn) -- (D2);
            \draw[lightgray] (v2) -- (D2);
            \draw[lightgray] (v3) -- (D2);
            \draw[lightgray] (vn-1) -- (D2);
            \draw[lightgray] (C1) -- (D1);
            \draw[lightgray] (C2) -- (D1);
            \draw[lightgray] (Cn-1) -- (D1);
            \draw[lightgray] (Cn) -- (D1);
            \draw[lightgray] (v1) -- (B2);
            \draw[lightgray] (v2) -- (B2);
            \draw[lightgray] (v3) -- (B2);
            \draw[lightgray] (vn-1) -- (B2);
            \draw[lightgray] (vn) -- (B2);
            \draw[lightgray] (A1) -- (A2);
            \draw (C1) -- (D2);
            \draw (C2) -- (D2);
            \draw (Cn-1) -- (D2);
            \draw (Cn) -- (D2);
            \draw (B2) -- (A2);
            \draw[] (B2) .. controls(14,-1) .. (D2);
            \fill[lightred] (v1) circle (2pt);
            \fill[lightred] (v2) circle (2pt);
            \fill[lightred] (v3) circle (2pt);
            \fill[lightred] (vn-1) circle (2pt);
            \fill[lightred] (vn) circle (2pt);
            \fill (C1) circle (2pt);
            \fill (C2) circle (2pt);
            \fill (Cn-1) circle (2pt);
            \fill (Cn) circle (2pt);
            \fill[lightred] (D1) circle (2pt);
            \fill (D2) circle (2pt);
            \fill[red] (B1) circle (2pt);
            \fill (B2) circle (2pt);
            \fill[lightred] (A1) circle (2pt);
            \fill (A2) circle (2pt);
        \end{tikzpicture}
        }
        \caption{$\Gamma - \st(b_1)$}
        \label{fig:GammaminusstB1}
    \end{subfigure}\\
    \vspace{0.4cm}
    \begin{subfigure}[b]{0.48\textwidth}
        \centering
        \resizebox{\textwidth}{!}{
        \centering
        \begin{tikzpicture}
            \coordinate[] (v1) at (0,0);
            \coordinate[] (v2) at (2,0);
            \coordinate[] (v3) at (4,0);
            \coordinate[label=center:\dots] (Dots1) at (6,0);
            \coordinate[] (vn-1) at (8,0);
            \coordinate[] (vn) at (10,0);
            \coordinate[] (C1) at (1,2);
            \coordinate[] (C2) at (3,2);
            \coordinate[label=center:\dots] (Dots2) at (6,2);
            \coordinate[] (Cn-1) at (9,2);
            \coordinate[] (Cn) at (11,2);
            \coordinate[] (D1) at (2,6);
            \coordinate[] (D2) at (8,6);
            \coordinate[] (B1) at (4,-2);
            \coordinate[] (B2) at (6,-2);
            \coordinate[] (A1) at (4,-4);
            \coordinate[] (A2) at (6,-4);
            \draw[lightred] (v1) -- (C1);
            \draw[lightred] (v1) -- (Cn);
            \draw[lightred] (v1) -- (D1);
            \draw[lightred] (v1) -- (B1);
            \draw[lightred] (v1) -- (B2);
            \draw[lightgray] (v2) -- (C1);
            \draw[lightgray] (vn) -- (Cn);
            \draw[lightgray] (C1) -- (D1);
            \draw[lightgray] (C2) -- (D1);
            \draw[lightgray] (Cn-1) -- (D1);
            \draw[lightgray] (Cn) -- (D1);
            \draw[lightgray] (C1) -- (D2);
            \draw[lightgray] (Cn) -- (D2);
            \draw[lightgray] (v2) -- (B1);
            \draw[lightgray] (v3) -- (B1);
            \draw[lightgray] (vn) -- (B1);
            \draw[lightgray] (vn-1) -- (B1);
            \draw[lightgray] (v2) -- (B2);
            \draw[lightgray] (v3) -- (B2);
            \draw[lightgray] (vn) -- (B2);
            \draw[lightgray] (vn-1) -- (B2);
            \draw[lightgray] (B1) -- (A1);
            \draw[lightgray] (B2) -- (A2);
            \draw[lightgray] (B1) .. controls(-2,-1) .. (D1);
            \draw[lightgray] (B2) .. controls(14,-1) .. (D2);
            \draw (v2) -- (C2);
            \draw (v3) -- (C2);
            \draw (vn-1) -- (Cn-1);
            \draw (vn) -- (Cn-1);
            \draw (vn) -- (D2);
            \draw (v2) -- (D2);
            \draw (v3) -- (D2);
            \draw (vn-1) -- (D2);
            \draw (C2) -- (D2);
            \draw (Cn-1) -- (D2);
            \draw (A1) -- (A2);
            \fill[red] (v1) circle (2pt);
            \fill (v2) circle (2pt);
            \fill (v3) circle (2pt);
            \fill (vn-1) circle (2pt);
            \fill (vn) circle (2pt);
            \fill[lightred] (C1) circle (2pt);
            \fill (C2) circle (2pt);
            \fill (Cn-1) circle (2pt);
            \fill[lightred] (Cn) circle (2pt);
            \fill[lightred] (D1) circle (2pt);
            \fill (D2) circle (2pt);
            \fill[lightred] (B1) circle (2pt);
            \fill[lightred] (B2) circle (2pt);
            \fill (A1) circle (2pt);
            \fill (A2) circle (2pt);
        \end{tikzpicture}
        }
        \caption{$\Gamma - \st(v_1)$}
        \label{fig:Gammaminusstv1}
    \end{subfigure}
    \hfill
    \begin{subfigure}[b]{0.48\textwidth}
        \centering
        \resizebox{\textwidth}{!}{
        \centering
        \begin{tikzpicture}
            \coordinate[] (v1) at (0,0);
            \coordinate[] (v2) at (2,0);
            \coordinate[] (v3) at (4,0);
            \coordinate[label=center:\dots] (Dots1) at (6,0);
            \coordinate[] (vn-1) at (8,0);
            \coordinate[] (vn) at (10,0);
            \coordinate[] (C1) at (1,2);
            \coordinate[] (C2) at (3,2);
            \coordinate[label=center:\dots] (Dots2) at (6,2);
            \coordinate[] (Cn-1) at (9,2);
            \coordinate[] (Cn) at (11,2);
            \coordinate[] (D1) at (2,6);
            \coordinate[] (D2) at (8,6);
            \coordinate[] (B1) at (4,-2);
            \coordinate[] (B2) at (6,-2);
            \coordinate[] (A1) at (4,-4);
            \coordinate[] (A2) at (6,-4);
            \draw[lightred] (v1) -- (C1);
            \draw[lightred] (v2) -- (C1);
            \draw[lightred] (C1) -- (D1);
            \draw[lightred] (C1) -- (D2);
            \draw[lightgray] (v2) -- (C2);
            \draw[lightgray] (v1) -- (Cn);
            \draw[lightgray] (v1) -- (D1);
            \draw[lightgray] (vn) -- (D2);
            \draw[lightgray] (v2) -- (D2);
            \draw[lightgray] (v3) -- (D2);
            \draw[lightgray] (vn-1) -- (D2);
            \draw[lightgray] (C2) -- (D1);
            \draw[lightgray] (Cn-1) -- (D1);
            \draw[lightgray] (Cn) -- (D1);
            \draw[lightgray] (C2) -- (D2);
            \draw[lightgray] (Cn-1) -- (D2);
            \draw[lightgray] (Cn) -- (D2);
            \draw[lightgray] (v1) -- (B1);
            \draw[lightgray] (v2) -- (B1);
            \draw[lightgray] (v1) -- (B2);
            \draw[lightgray] (v2) -- (B2);
            \draw[lightgray] (B1) .. controls(-2,-1) .. (D1);
            \draw[lightgray] (B2) .. controls(14,-1) .. (D2);
            \draw (v3) -- (C2);
            \draw (vn-1) -- (Cn-1);
            \draw (vn) -- (Cn-1);
            \draw (vn) -- (Cn);
            \draw (v3) -- (B1);
            \draw (vn) -- (B1);
            \draw (vn-1) -- (B1);
            \draw (v3) -- (B2);
            \draw (vn) -- (B2);
            \draw (vn-1) -- (B2);
            \draw (B1) -- (A1);
            \draw (B2) -- (A2);
            \draw (A1) -- (A2);
            \fill[lightred] (v1) circle (2pt);
            \fill[lightred] (v2) circle (2pt);
            \fill (v3) circle (2pt);
            \fill (vn-1) circle (2pt);
            \fill (vn) circle (2pt);
            \fill[red] (C1) circle (2pt);
            \fill (C2) circle (2pt);
            \fill (Cn-1) circle (2pt);
            \fill (Cn) circle (2pt);
            \fill[lightred] (D1) circle (2pt);
            \fill[lightred] (D2) circle (2pt);
            \fill (B1) circle (2pt);
            \fill (B2) circle (2pt);
            \fill (A1) circle (2pt);
            \fill (A2) circle (2pt);
        \end{tikzpicture}
        }
        \caption{$\Gamma - \st(c_1)$}
        \label{fig:GammaminusstC1}
    \end{subfigure}\\
    \vspace{0.4cm}
    \begin{subfigure}[b]{0.48\textwidth}
        \centering
        \resizebox{\textwidth}{!}{
        \centering
        \begin{tikzpicture}
            \coordinate[] (v1) at (0,0);
            \coordinate[] (v2) at (2,0);
            \coordinate[] (v3) at (4,0);
            \coordinate[label=center:\dots] (Dots1) at (6,0);
            \coordinate[] (vn-1) at (8,0);
            \coordinate[] (vn) at (10,0);
            \coordinate[] (C1) at (1,2);
            \coordinate[] (C2) at (3,2);
            \coordinate[label=center:{\color{lightred}\dots}] (Dots2) at (6,2);
            \coordinate[] (Cn-1) at (9,2);
            \coordinate[] (Cn) at (11,2);
            \coordinate[] (D1) at (2,6);
            \coordinate[] (D2) at (8,6);
            \coordinate[] (B1) at (4,-2);
            \coordinate[] (B2) at (6,-2);
            \coordinate[] (A1) at (4,-4);
            \coordinate[] (A2) at (6,-4);
            \draw[lightred] (v1) -- (D1);
            \draw[lightred] (C1) -- (D1);
            \draw[lightred] (C2) -- (D1);
            \draw[lightred] (Cn-1) -- (D1);
            \draw[lightred] (Cn) -- (D1);
            \draw[lightred] (B1) .. controls(-2,-1) .. (D1);
            \draw[lightgray] (v1) -- (C1);
            \draw[lightgray] (v2) -- (C1);
            \draw[lightgray] (v2) -- (C2);
            \draw[lightgray] (v3) -- (C2);
            \draw[lightgray] (vn-1) -- (Cn-1);
            \draw[lightgray] (vn) -- (Cn-1);
            \draw[lightgray] (vn) -- (Cn);
            \draw[lightgray] (v1) -- (Cn);
            \draw[lightgray] (C1) -- (D2);
            \draw[lightgray] (C2) -- (D2);
            \draw[lightgray] (Cn-1) -- (D2);
            \draw[lightgray] (Cn) -- (D2);
            \draw[lightgray] (v1) -- (B1);
            \draw[lightgray] (v2) -- (B1);
            \draw[lightgray] (v3) -- (B1);
            \draw[lightgray] (vn-1) -- (B1);
            \draw[lightgray] (vn) -- (B1);
            \draw[lightgray] (v1) -- (B2);
            \draw[lightgray] (B1) -- (A1);
            \draw (vn) -- (D2);
            \draw (v2) -- (D2);
            \draw (v3) -- (D2);
            \draw (vn-1) -- (D2);
            \draw (v2) -- (B2);
            \draw (v3) -- (B2);
            \draw (vn-1) -- (B2);
            \draw (vn) -- (B2);
            \draw (B2) -- (A2);
            \draw (A1) -- (A2);
            \draw[] (B2) .. controls(14,-1) .. (D2);
            \fill[lightred] (v1) circle (2pt);
            \fill (v2) circle (2pt);
            \fill (v3) circle (2pt);
            \fill (vn-1) circle (2pt);
            \fill (vn) circle (2pt);
            \fill[lightred] (C1) circle (2pt);
            \fill[lightred] (C2) circle (2pt);
            \fill[lightred] (Cn-1) circle (2pt);
            \fill[lightred] (Cn) circle (2pt);
            \fill[red] (D1) circle (2pt);
            \fill (D2) circle (2pt);
            \fill[lightred] (B1) circle (2pt);
            \fill (B2) circle (2pt);
            \fill (A1) circle (2pt);
            \fill (A2) circle (2pt);
        \end{tikzpicture}
        }
        \caption{$\Gamma - \st(d_1)$}
        \label{fig:GammaminusstD1}
    \end{subfigure}
    \caption{$\Gamma - \st(w)$ for some important vertices $w \in V(\Gamma)$}
    \label{fig:Gammaminusstwforseveralw}
\end{figure}
We want to use Theorem \ref{thm:condforpsobeingaraag}. So, we first need to show that every support graph is a forest. To compute the support graphs, we need to consider $\Gamma - \st(w)$ for all vertices $w$ of $\Gamma$. Some of these are shown in Figure \ref{fig:Gammaminusstwforseveralw}. The vertex $w$ is coloured in red, the rest of $\st(w)$ is coloured in light red and all other deleted edges are coloured in grey. Note that for Figure \ref{fig:Gammaminusstv1}, it might be that also some of the vertices $\{v_2,\dots,v_n\}$ should be coloured light red, namely all vertices in $\st_\Lambda(v_1)$. For the vertices we did not consider in Figure \ref{fig:Gammaminusstwforseveralw}, $\Gamma - \st(w)$ is analogous to one of the vertices we considered there.\par
We can see that for all vertices $w$ that are not in $\Lambda$, $\Gamma - \st(w)$ is connected and thus the support graph $\sg{\Gamma}{w}$ consists of one vertex. Here, we used that $n \geq 3$, otherwise $\Gamma - \st(c_1)$ respectively $\Gamma - \st(c_2)$ have two components as $c_2$ respectively $c_1$ become isolated. For the vertices $v_i$, the graph $\Gamma - \st(v_i)$ has two components, one of which is $\{a_1,a_2\}$. The second component contains all other vertices that are not in the star of $v_i$. Using the definition of the support graph, this means that $\sg{\Gamma}{v_i}$ has two vertices. Whether there is an edge between them depends on whether $V(\Lambda) \setminus \st_\Lambda(v_i)$ is empty or not. If $V(\Lambda) \setminus \st_\Lambda(v_i) \neq \emptyset$, then there is a $v_j \in V(\Lambda) \setminus \st_\Lambda(v_i)$ and $\{a_1,a_2\}$ is also a component of $\Gamma - \st(v_j)$. Thus, by Definition \ref{def:suppgraph}, in this case there is an edge in the support graph. However, if $V(\Lambda) \setminus \st_\Lambda(v_i) = \emptyset$, we cannot satisfy the condition of Definition \ref{def:suppgraph}, so the support graph has no edge.\par
In summary, we see that all support graphs are forests and thus $\PSO(A_\Gamma)$ is a right-angled Artin group by Theorem \ref{thm:condforpsobeingaraag}. As in this theorem, we call the underlying graph for this right-angled Artin group $\Theta$. Furthermore, we know from the structure of the support graphs that there is exactly one vertex $u_i$ in the graph $\Theta$ for every vertex $v_i$ in the graph $\Gamma$. Depending on whether $V(\Lambda) \setminus \st_\Lambda(v_i) = \emptyset$ or not, it is a vertex of type \rom{2} or \rom{1}. If $V(\Lambda) \setminus \st_\Lambda(v_i) = \emptyset$, then
\[u_i \de \beta_1^{v_i}.\]
If $V(\Lambda) \setminus \st_\Lambda(v_i) \neq \emptyset$, then
\[u_i \de \alpha_{e_i}^{v_i},\]
where $e_i$ is the edge in the support graph $\sg{\Gamma}{v_i}$.\par
We claim that we have $v_i \sim v_j$ if and only if $u_i \sim u_j$, which shows that $\Lambda$ is isomorphic to $\Theta$. In order to show this claim, we distinguish whether $v_i$ is adjacent to $v_j$ or not.
\begin{description}[labelindent=\parindent,labelwidth = \widthof{$v_i \sim v_j$:},leftmargin=!]
    \item[$v_i \sim v_j$:] If $V(\Lambda) \setminus \st_\Lambda(v_i) = \emptyset$ or $V(\Lambda) \setminus \st_\Lambda(v_j) = \emptyset$, then $u_i \sim u_j$ as $u_i$ respectively $u_j$ is of type \rom{2} and thus connected to all other vertices. Otherwise, both are of type \rom{1}, but $(v_i,v_j)$ is not a SIL-pair as they are connected. Thus, we have $u_i \sim u_j$ as well.
    \item[$v_i \not\sim v_j$:] Here, both $V(\Lambda) \setminus \st_\Lambda(v_i)$ and $V(\Lambda) \setminus \st_\Lambda(v_j)$ are non-empty. Thus, both $u_i$ and $u_j$ are of type \rom{1}. More precisely,
    \[u_i = \alpha_{e_i}^{v_i} \text{ and } u_j = \alpha_{e_j}^{v_j},\]
    where $e_i$ and $e_j$ are again the corresponding edges in the support graphs $\sg{\Gamma}{v_i}$ and $\sg{\Gamma}{v_j}$ respectively.\par
    Also, $\Gamma - (\lk(v_i) \cap \lk(v_j))$ has $\{a_1, a_2\}$ as a component since both $b_1$ and $b_2$ are in $\lk(v_i) \cap \lk(v_j)$. As $v_i \not\sim v_j$, we get that $(v_i,v_j)$ is a \mbox{SIL-pair}. Furthermore, the edges in the support graphs of $v_i$ and $v_j$ are of the form $([v_j]_{v_i},\{a_1,a_2\})$ and $([v_i]_{v_j},\{a_1,a_2\})$. Hence, there is no edge between $u_i$ and $u_j$.
\end{description}
Thus, we indeed have that $\Lambda \cong \Theta$. We can conclude that
\[A_\Lambda \cong A_\Theta \cong \PSO(A_\Gamma),\]
which is what we wanted to show.
\end{proof}
We now prove the second lemma, which states that for this $\Gamma = \Gamma(\Lambda)$ we have that $\PSO(A_\Gamma)$ has finite index in $\Out(A_\Gamma)$.
\begin{proof}[Proof of Lemma \ref{lem:PSOAGammahasfiniteindex}]
In order to prove this, we want to use Theorem \ref{thm:condforfiniteindex}, so we need to show that $\lk(u) \subseteq \st(w)$ implies $u = w$.
\begin{table}[b]
    \centering
    \resizebox{\textwidth}{!}{
    \begin{tabular}{|c|c|c|c|c|c|c|c|c|c|}
        \hline
        & $\st(a_1)$ & $\st(a_2)$ & $\st(b_1)$ & $\st(b_2)$ & $\st(v_1)$ & $\st(v_l)$ & $\st(c_j)$ & $\st(d_1)$ & $\st(d_2)$\\\hline
        $\lk(a_1)$ & $\checkmark$ & $b_1$ & $a_2$ & $b_1$ & $a_2$ & $a_2$ & $a_2$ & $a_2$ & $a_2$ \\\hline
        $\lk(a_2)$ & $b_2$ & $\checkmark$ & $b_2$ & $a_1$ & $a_1$ & $a_1$ & $a_1$ & $a_1$ & $a_1$ \\\hline
        $\lk(b_1)$ & $d_1$ & $d_1$ & $\checkmark$ & $d_1$ & $a_1$ & $a_1$ & $a_1$ & $a_1$ & $a_1$ \\\hline
        $\lk(b_2)$ & $d_2$ & $d_2$ & $d_2$ & $\checkmark$ & $a_2$ & $a_2$ & $a_2$ & $a_2$ & $a_2$ \\\hline
        $\lk(v_1)$ & $b_2$ & $b_1$ & $b_2$ & $b_1$ & $\checkmark$ & $d_1$ & $b_1$ & $b_2$ & $b_1$ \\\hline
        $\lk(v_k)$ & $b_2$ & $b_1$ & $b_2$ & $b_1$ & $d_2$ & $*$ & $b_1$ & $b_2$ & $b_1$ \\\hline
        $\lk(c_i)$ & $d_1$ & $d_1$ & $d_2$ & $d_1$ & $d_2$ & $d_1$ & $*$ & $d_2$ & $d_1$ \\\hline
        $\lk(d_1)$ & $c_1$ & $c_1$ & $c_1$ & $c_1$ & $c_2$ & $*$ & $b_1$ & $\checkmark$ & $v_1$ \\\hline
        $\lk(d_2)$ & $c_1$ & $c_1$ & $c_1$ & $c_1$ & $c_2$ & $*$ & $b_2$ & $v_2$ & $\checkmark$ \\\hline
    \end{tabular}}
    \caption{Determining when $\text{lk(u)}$ is a subset of $\st(w)$}
    \label{tab:lkusubseteqstv}
\end{table}
We do this with Table \ref{tab:lkusubseteqstv}, which lists all pairs $(\lk(u), \st(w))$. If $\lk(u) \subseteq \st(w)$, we write a ``$\checkmark$'', otherwise we provide an element in $\lk(u) \setminus \st(w)$. Regarding the indices, $i$ and $j$ go from $1$ to $n$ and $k$ and $l$ go from $2$ to $n$. For the parts denoted with ``$*$'', we need to distinguish what the indices are, which we do in the following. All these special cases need the assumption $n \geq 3$. The problem with $n=2$ is that we only have two vertices $c_i$ and both of them are connected to all vertices of $\Lambda$. We also need this assumption in the table, namely that $c_2 \neq c_n$ and thus $c_2 \notin \st(v_1)$.
\begin{itemize}
    \item $\lk(v_k) \subseteq \st(v_l)$ is only true if $k = l$, otherwise there is a vertex of $\{c_1,\dots,c_n\}$ that is in $\lk(v_k) \setminus \st(v_l)$.
    \item $\lk(c_i) \subseteq \st(c_j)$ holds only if $i=j$, otherwise a vertex of $\{v_1,\dots,v_n\}$ is in $\lk(c_i) \setminus \st(c_j)$.
    \item $\lk(d_1) \not\subseteq \st(v_l)$ as $c_1$ or $c_n$ is not in $\st(v_l)$ since $l>1$.
    \item $\lk(d_2) \not\subseteq \st(v_l)$ by the same reasoning as for $\lk(d_1) \not\subseteq \st(v_l)$.
\end{itemize}
With the table and the special cases, we conclude that $\lk(u) \subseteq \st(w)$ implies $u = w$. As discussed in the beginning, this concludes the proof.
\end{proof}
Next, we provide a better construction to strengthen Theorem \ref{thm:anyraagisfinindsubgrofoutraag}.
\begin{thm}\label{thm:anyraagisfinindsubgrofoutraagnographauto}
For any graph $\Lambda$, there is a graph $\Gamma' = \Gamma'(\Lambda)$ such that
\begin{enumerate}
    \item $A_\Lambda \cong \PSO(A_{\Gamma'})$,
    \item $\PSO(A_{\Gamma'})$ has finite index in $\Out(A_{\Gamma'})$ and
    \item $\Gamma'$ has no non-trivial graph automorphisms.
\end{enumerate}
\end{thm}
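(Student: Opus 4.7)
My plan is to modify the construction $\Gamma = \Gamma(\Lambda)$ of Lemma \ref{lem:PSOAGammaisALambda} by attaching a rigid asymmetric ``tag'' that destroys every non-trivial graph automorphism without disturbing the two properties already established. First I would enumerate the graph automorphisms of $\Gamma$ that need to be killed: there is the obvious involution swapping the ``left'' gadget $(a_1,b_1,d_1)$ with the ``right'' gadget $(a_2,b_2,d_2)$ and simultaneously reversing the cycle $v_1,c_1,v_2,c_2,\dots,v_n,c_n$; there are any rotational symmetries of that cycle induced by symmetries of $\Lambda$; and there are any further graph automorphisms of $\Lambda$ itself (which extend to $\Gamma$).

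The modification I would try is to attach a small collection of new vertices with pre-designed, pairwise-distinct neighbourhoods, placed on already asymmetric spots of $\Gamma$. For example, one could single out $d_1$ by attaching one or two extra vertices adjacent to $d_1$ together with a hub like $b_1$, and single out additional vertices of the cycle by analogous, pairwise-distinct adjacencies. The design constraints, dictated by Theorems \ref{thm:condforpsobeingaraag} and \ref{thm:condforfiniteindex}, are:
\begin{enumerate}
    \item For every new vertex $u$, the graph $\Gamma' - \st(u)$ should be connected, so that $u$ contributes no edges to any support graph and hence no new generators to $\PSO(A_{\Gamma'})$. Moreover, for every old $w$, the components of $\Gamma' - \st(w)$ should agree with those of $\Gamma - \st(w)$, possibly with the new vertices absorbed into an already existing component. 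Together these force $\Theta' \cong \Theta \cong \Lambda$ and hence $\PSO(A_{\Gamma'}) \cong A_\Lambda$.
    \item For every pair $u \neq w$ of vertices (new or old), there is some vertex in $\lk(u) \setminus \st(w)$, so that condition \eqref{eq:conditionforfiniteindex} continues to hold and Theorem \ref{thm:condforfiniteindex} still gives finite index.
\end{enumerate}

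The main obstacle is point (1): a naive pendant vertex attached to a single vertex $v$ would instantly become an isolated component of $\Gamma' - \st(v')$ for many other $v'$, potentially creating extra support-graph vertices and thus extra generators of $\PSO(A_{\Gamma'})$ that would no longer match $A_\Lambda$. The remedy is to make every new vertex adjacent to a sufficiently rich set of ``hub'' vertices (for instance to both $d_1$ and $d_2$, or to all $c_i$), so that after deleting any star the new vertices either already lie in the deleted star or merge with the pre-existing main component. Designing a gadget that is simultaneously internally rigid (no internal graph automorphisms), rigidly attached (no automorphism can move the attachment point), and densely enough connected to respect (1) and (2), is the delicate bookkeeping part of the proof.

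Once such a $\Gamma'$ has been constructed, the analogues of Lemmas \ref{lem:PSOAGammaisALambda} and \ref{lem:PSOAGammahasfiniteindex} transfer with essentially the same proof: the support-graph analysis of Figure \ref{fig:Gammaminusstwforseveralw} extends to the new vertices by (1), and Table \ref{tab:lkusubseteqstv} extends to them by (2). It then only remains to check that $\Gamma'$ has trivial graph automorphism group, which is a finite, explicit verification: the rigidifying attachments uniquely identify at least one anchor vertex, and from this anchor the adjacency structure of the cycle of $v_i, c_i$ vertices together with the two gadgets $(a_j, b_j, d_j)$ propagates uniqueness to every remaining vertex of $\Gamma'$.
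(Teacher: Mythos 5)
Your blueprint identifies the right constraints (keep every support graph a forest with the same $\Theta$, keep condition \eqref{eq:conditionforfiniteindex}, kill all graph automorphisms), but as it stands it is a plan rather than a proof: the theorem is an existence statement, and the one thing you never do is exhibit a concrete rigidifying gadget and verify the three properties for it. You explicitly defer this (``designing a gadget \dots is the delicate bookkeeping part of the proof''), yet that is exactly where the content lies. The tension you would have to resolve is real: a new vertex must have a link rich enough that it is contained in no star (otherwise a $\Gamma'$-legal transvection appears and Theorem \ref{thm:condforfiniteindex} fails --- note a pendant tag vertex $t$ attached to a hub $h$ has $\lk(t)=\{h\}\subseteq\st(h)$, so it violates (2), not only your point (1)), it must not create or merge components of any $\Gamma'-\st(w)$ in a way that changes a support graph, and the whole attachment must be asymmetric enough to kill the left--right involution, the cycle symmetries compatible with $\Lambda$, and the automorphisms inherited from $\Lambda$. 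Until a specific graph satisfying all of this simultaneously is written down and checked (your claim that the analogues of Lemmas \ref{lem:PSOAGammaisALambda} and \ref{lem:PSOAGammahasfiniteindex} ``transfer with essentially the same proof'' cannot even be tested before the gadget is fixed), the argument has a genuine gap.

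For comparison, the paper does not attach a tag to $\Gamma$ but redesigns the construction: $\Gamma'$ uses three gadgets $a_1,a_2,a_3$, $b_1,b_2,b_3$, $d_1,d_2,d_3$ with pairwise distinct attachments --- $d_1\sim v_1,b_1$; $d_2\sim v_2$ but \emph{not} $b_2$; $d_3\sim v_k$ for $k>2$ and $b_3$; plus the edge $\{d_2,d_3\}$ and the triangle on $a_1,a_2,a_3$. These asymmetries make the vertex degrees and adjacencies propagate rigidity level by level (first the $a_i$, then the $b_i$, then upwards), while each new vertex still has a large enough link that the link--star table and the support-graph analysis go through as for $\Gamma$. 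Your approach could in principle be completed along similar lines, but you would need to supply and verify such an explicit graph.
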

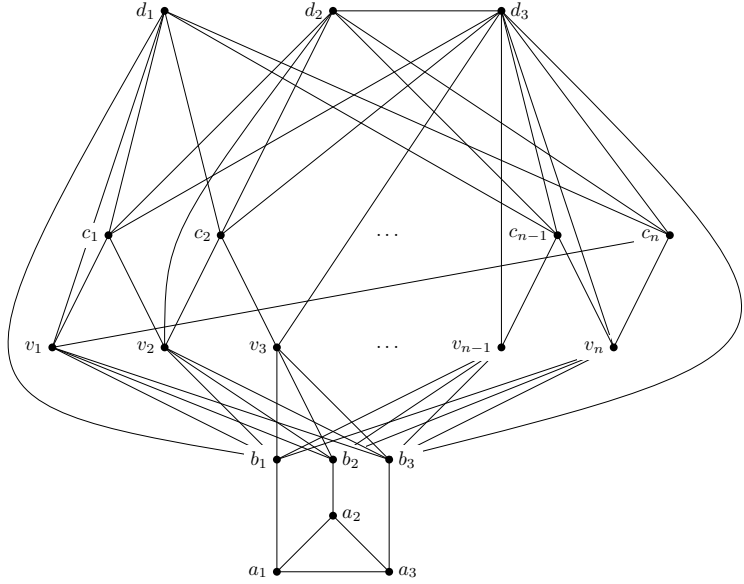
\begin{figure}
    \centering
    \resizebox{\textwidth}{!}{
    \centering
    \begin{tikzpicture}

    \coordinate (v1) at (0,0);
    \coordinate (v2) at (2,0);
    \coordinate (v3) at (4,0);
    \coordinate (Dots1) at (6,0);
    \coordinate (vn-1) at (8,0);
    \coordinate (vn) at (10,0);
    \coordinate (C1) at (1,2);
    \coordinate (C2) at (3,2);
    \coordinate (Dots2) at (6,2);
    \coordinate (Cn-1) at (9,2);
    \coordinate (Cn) at (11,2);
    \coordinate (D1) at (2,6);
    \coordinate (D3) at (8,6);
    \coordinate (D2) at (5,6);
    \coordinate (B1) at (4,-2);
    \coordinate (B3) at (6,-2);
    \coordinate (B2) at (5,-2);
    \coordinate (A1) at (4,-4);
    \coordinate (A3) at (6,-4);
    \coordinate (A2) at (5,-3);

    \node at (Dots1) {\dots};
    \node at (Dots2) {\dots};
    \node at (D1) [left = 0.4mm of D1, fill=white] {$d_1$};
    \node at (D2) [left = 0.4mm of D2, fill=white] {$d_2$};
    \node at (D3) [right = 0.4mm of D3, fill=white] {$d_3$};

    \draw (v1) -- (D1);
    \draw (v1) -- (Cn);
    \draw (v1) -- (B1);
    \draw (v2) -- (B1);
    \draw[] (B1) .. controls(-2,-1) .. (D1);
    \draw (vn-1) -- (B1);
    \draw (vn) -- (B1);
    \draw (vn) -- (B2);
    \draw[] (B3) .. controls(14,0) .. (D3);
    \draw (vn) -- (D3);
    \draw (vn-1) -- (B2);
    \draw (vn-1) -- (D3);
    \draw (vn-1) -- (B3);
    \draw (vn) -- (B3);

    \node at (v1) [left = 0.4mm of v1, fill=white] {$v_1$};
    \node at (v2) [left = 0.4mm of v2, fill=white] {$v_2$};
    \node at (v3) [left = 0.4mm of v3, fill=white] {$v_3$};
    \node at (vn-1) [left = 0.4mm of vn-1, fill=white] {$v_{n-1}$};
    \node at (vn) [left = 0.4mm of vn, fill=white] {$v_n$};
    \node at (C1) [left = 0.4mm of C1, fill=white] {$c_1$};
    \node at (C2) [left = 0.4mm of C2, fill=white] {$c_2$};
    \node at (Cn-1) [left = 0.4mm of Cn-1, fill=white] {$c_{n-1}$};
    \node at (Cn) [left = 0.4mm of Cn, fill=white] {$c_n$};
    \node at (B1) [left = 0.4mm of B1, fill=white] {$b_1$};
    \node at (B2) [right = 0.4mm of B2, fill = white] {$b_2$};
    \node at (B3) [right = 0.4mm of B3, fill = white] {$b_3$};
    \node at (A1) [left = 0.4mm of A1, fill=white] {$a_1$};
    \node at (A2) [right = 0.4mm of A2, fill = white] {$a_2$};
    \node at (A2) [right = 0.4mm of A3, fill = white] {$a_3$};

    \draw (v1) -- (C1);
    \draw (v2) -- (C1);
    \draw (v2) -- (C2);
    \draw (v3) -- (C2);
    \draw (vn-1) -- (Cn-1);
    \draw (vn) -- (Cn-1);
    \draw (vn) -- (Cn);
    \draw[] (v2) .. controls(2,2) .. (D2);
    \draw (v3) -- (D3);
    \draw (C1) -- (D1);
    \draw (C2) -- (D1);
    \draw (Cn-1) -- (D1);
    \draw (Cn) -- (D1);
    \draw (C1) -- (D2);
    \draw (C2) -- (D2);
    \draw (Cn-1) -- (D2);
    \draw (Cn) -- (D2);
    \draw (C1) -- (D3);
    \draw (C2) -- (D3);
    \draw (Cn-1) -- (D3);
    \draw (Cn) -- (D3);
    \draw (v3) -- (B1);
    \draw (v1) -- (B2);
    \draw (v2) -- (B2);
    \draw (v3) -- (B2);
    \draw (v1) -- (B3);
    \draw (v2) -- (B3);
    \draw (v3) -- (B3);
    \draw (B1) -- (A1);
    \draw (B2) -- (A2);
    \draw (B3) -- (A3);
    \draw (A1) -- (A2);
    \draw (A1) -- (A3);
    \draw (A2) -- (A3);
    \draw (D2) -- (D3);

    \fill (v1) circle (2pt);
    \fill (v2) circle (2pt);
    \fill (v3) circle (2pt);
    \fill (vn-1) circle (2pt);
    \fill (vn) circle (2pt);
    \fill (C1) circle (2pt);
    \fill (C2) circle (2pt);
    \fill (Cn-1) circle (2pt);
    \fill (Cn) circle (2pt);
    \fill (D1) circle (2pt);
    \fill (D2) circle (2pt);
    \fill (D3) circle (2pt);
    \fill (B1) circle (2pt);
    \fill (B2) circle (2pt);
    \fill (B3) circle (2pt);
    \fill (A1) circle (2pt);
    \fill (A2) circle (2pt);
    \fill (A3) circle (2pt);
    \end{tikzpicture}
    }
    \caption{Construction of $\Gamma'$}
    \label{fig:construction_new}
\end{figure}
We again assume $n \geq 3$; the other cases are covered in Appendix \ref{app:smallgraphs}. We get $\Gamma' = \Gamma'(\Lambda)$ from $\Lambda$ by adding the vertices $a_1$, $a_2$, $a_3$, $b_1$, $b_2$, $b_3$, $c_1$, \dots, $c_n$, $d_1$, $d_2$ and $d_3$ and the edges that are shown in Figure \ref{fig:construction_new}: The vertices $d_i$ are connected to all vertices $c_j$. Furthermore, $d_1$ is connected to $v_1$ and $b_1$, $d_2$ to $v_2$, $d_3$ to $v_k$ for $k > 2$ and to $b_3$ and we have the edge $\{d_2, d_3\}$. Note that $d_2$ is not connected to $b_2$. As for the construction of $\Gamma$ for Theorem \ref{thm:anyraagisfinindsubgrofoutraag}, $c_j$ is connected to $v_j$ and $v_{j+1}$ ($v_{n+1} \coloneqq v_1$), the vertices $v_k$ are connected to $b_1$, $b_2$ and $b_3$ and we have all edges than are present in $\Lambda$ also in $\Gamma'$. Finally, we have the edges $\{b_1, a_1\}$, $\{b_2, a_2\}$, $\{b_3, a_3\}$, $\{a_1, a_2\}$, $\{a_1, a_3\}$ and $\{a_2, a_3\}$. The proof of the theorem now follows from the following three lemmas.
\begin{lem}\label{lem:PSOAGamma'isALambda}
For this graph $\Gamma' = \Gamma'(\Lambda)$, we have that $\PSO(A_{\Gamma'})$ is isomorphic to $A_\Lambda$.
\end{lem}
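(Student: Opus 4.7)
The plan is to follow the template of the proof of Lemma \ref{lem:PSOAGammaisALambda} almost verbatim: I will invoke Theorem \ref{thm:condforpsobeingaraag} to identify $\PSO(A_{\Gamma'})$ with $A_\Theta$ for a graph $\Theta$ read off from the support graphs of $\Gamma'$, and then show that $\Theta \cong \Lambda$. The essential features of the original construction are preserved in $\Gamma'$: the triangle $\{a_1, a_2, a_3\}$ plays the role that $\{a_1, a_2\}$played before; all $b_i$'s are adjacent to every $v_j$ and form a ``buffer'' separating the $a$-triangle from the rest; and the $c_j$--$d_i$--$v_k$ subgraph keeps the rest of $\Gamma'$ well-connected.

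First, I will verify that every support graph of $\Gamma'$ is a forest by computing $\Gamma' - \st(w)$ for each vertex $w$. For each auxiliary vertex $w \in V(\Gamma') \setminus V(\Lambda)$ the goal is to show that $\Gamma' - \st(w)$ is connected, so that $\sg{\Gamma'}{w}$ consists of a single vertex. For instance, in $\Gamma' - \st(d_2)$ the vertex $d_1$ and the buffer $b_1$ remain on $v_1$'s side, the vertices $v_k$ with $k \geq 3$ are reached through $b_1$ and $b_3$, and the $a$-triangle links back via $a_k \sim b_k$ for $k \in \{1,3\}$; analogous (but individually tailored) arguments work for $d_1$, $d_3$, the $b_i$'s, the $a_i$'s and the $c_j$'s. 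For each $v_i \in V(\Lambda)$, because $b_1, b_2, b_3 \in \st(v_i)$, the set $\{a_1, a_2, a_3\}$ is isolated from the rest; meanwhile the non-$a$ part remains connected via the $c$--$d$ structure (using $n \geq 3$), so $\Gamma' - \st(v_i)$ has exactly two components and $\sg{\Gamma'}{v_i}$ has exactly two vertices.

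Next, I determine when these two vertices of $\sg{\Gamma'}{v_i}$ are joined by an edge, mirroring the original proof: an edge exists if and only if $V(\Lambda) \setminus \st_\Lambda(v_i) \neq \emptyset$, because for any such $v_j$ the set $\{a_1, a_2, a_3\}$ is also a component of $\Gamma' - \st(v_j)$, whereas no auxiliary vertex in the ``bulk'' component has $\{a_1, a_2, a_3\}$ as a component of its $\st$-complement. All support graphs are therefore forests, and Theorem \ref{thm:condforpsobeingaraag} gives $\PSO(A_{\Gamma'}) \cong A_\Theta$, with one vertex $u_i$ of $\Theta$ per $v_i \in V(\Lambda)$, of type \rom{2} if $v_i$ is $\Lambda$-adjacent to every other $v_j$, and of type \rom{1} otherwise.

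Finally, I will carry out the same SIL-pair analysis as in Lemma \ref{lem:PSOAGammaisALambda} to conclude $\Theta \cong \Lambda$: if $v_i \sim_\Lambda v_j$ then $u_i \sim u_j$ (either one is type \rom{2}, or both are type \rom{1} and $(v_i,v_j)$ is not a SIL-pair); if $v_i \not\sim_\Lambda v_j$ then both $u_i, u_j$ are type \rom{1}, and $(v_i,v_j)$ is a SIL-pair witnessed by the component $\{a_1,a_2,a_3\}$ of $\Gamma' - (\lk(v_i) \cap \lk(v_j))$ (using $b_1,b_2,b_3 \in \lk(v_i) \cap \lk(v_j)$), so $u_i \not\sim u_j$. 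I expect the main obstacle to be the first step: the construction of $\Gamma'$ deliberately breaks the symmetry of $\Gamma$ (in order to kill its graph automorphisms, which is needed for Lemma \ref{lem:PSOAGamma'isALambda}'s companion statements), so each auxiliary vertex $w$ must be handled individually rather than by appealing to a global symmetry, and small asymmetries such as $d_2 \not\sim b_2$ and the distribution $d_1 \sim v_1$, $d_2 \sim v_2$, $d_3 \sim v_k$ for $k > 2$ have to be checked not to disconnect $\Gamma' - \st(w)$ in any case.
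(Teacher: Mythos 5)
Your proposal is correct and follows essentially the same route as the paper, which itself proves this lemma by repeating the argument of Lemma \ref{lem:PSOAGammaisALambda} with Theorem \ref{thm:condforpsobeingaraag}: the triangle $\{a_1,a_2,a_3\}$ replaces $\{a_1,a_2\}$ as the distinguished component, each auxiliary vertex $w$ has $\Gamma'-\st(w)$ connected (your case checks for the asymmetric $d_i$, $b_i$ are the right ones), and the SIL-pair analysis then gives $\Theta\cong\Lambda$. The details you sketch (two components for each $\Gamma'-\st(v_i)$, edge in $\sg{\Gamma'}{v_i}$ iff $V(\Lambda)\setminus\st_\Lambda(v_i)\neq\emptyset$, no edge between $\alpha_{e_i}^{v_i}$ and $\alpha_{e_j}^{v_j}$ exactly when $v_i\not\sim_\Lambda v_j$) are accurate, so the plan goes through as stated.
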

\begin{lem}\label{lem:PSOAGamma'hasfiniteindex}
For this graph $\Gamma' = \Gamma'(\Lambda)$, $\PSO(A_{\Gamma'})$ has finite index in $\Out(A_{\Gamma'})$.
\end{lem}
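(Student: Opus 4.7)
The plan is to mirror the proof of Lemma \ref{lem:PSOAGammahasfiniteindex} almost verbatim: by Theorem \ref{thm:condforfiniteindex}, it suffices to verify that for all $u, w \in V(\Gamma')$ the implication $\lk(u) \subseteq \st(w) \Rightarrow u = w$ holds. I would again organise the verification as a table whose rows are indexed by $\lk(u)$ and whose columns by $\st(w)$, with $u, w$ ranging over the vertex types $a_i, b_i, v_k, c_j, d_i$ of $\Gamma'$; for each off-diagonal entry I would exhibit an explicit witness in $\lk(u) \setminus \st(w)$.

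The first step is to read off the links from Figure \ref{fig:construction_new}: $\lk(a_i) = \{b_i\} \cup \{a_j : j \neq i\}$, $\lk(b_1) = \{a_1, d_1, v_1, \dots, v_n\}$, $\lk(b_2) = \{a_2, v_1, \dots, v_n\}$ (note no $d$-vertex), $\lk(b_3) = \{a_3, d_3, v_1, \dots, v_n\}$, $\lk(c_j) = \{v_j, v_{j+1}, d_1, d_2, d_3\}$, and similar descriptions for $\lk(v_k)$ and $\lk(d_i)$. The bulk of the cases are then handled by the following simple witnesses: between distinct $a$'s the vertex $b_i \in \lk(a_i) \setminus \st(a_j)$ works; between any $b_i$ and a non-$b$ vertex, the vertex $a_i \in \lk(b_i)$ lies outside all stars except $\st(a_i)$ and $\st(b_i)$; between distinct $v_k, v_l$ and distinct $c_i, c_j$, a suitable $c$- or $v$-witness appears exactly as in Table \ref{tab:lkusubseteqstv}, using $n \geq 3$; between distinct $d$'s and between $d$'s and other non-$c$ vertices, a $c$-vertex provides the witness.

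The only new subtlety, compared with Lemma \ref{lem:PSOAGammahasfiniteindex}, is the deliberate asymmetry that kills graph automorphisms. Because $b_2 \not\sim d_2$, one has to be careful that $\lk(b_2) \not\subseteq \st(b_1)$ and $\lk(b_2) \not\subseteq \st(b_3)$; here $a_2 \in \lk(b_2)$ serves as the witness, so nothing goes wrong. Conversely, because $b_2$ has no $d$-neighbour, one could worry that $\lk(b_2) \subseteq \st(w)$ for some $w \notin \{b_1, b_2, b_3\}$, but $a_2 \in \lk(b_2)$ again rules this out whenever $w \neq a_2, b_2$. The asymmetric adjacency pattern $v_1 \sim d_1$, $v_2 \sim d_2$, $v_k \sim d_3$ for $k > 2$, together with $d_2 \sim d_3$ but $d_1 \not\sim d_2, d_3$, likewise needs to be checked carefully for the $d_i$-rows and the $v_k$-rows: for $k \neq l$ the witness for $\lk(v_k) \subseteq \st(v_l)$ is the appropriate $c$-vertex (using $n \geq 3$), and for $\lk(d_i) \subseteq \st(d_{i'})$ the witness is a $c$-vertex whose $v$-neighbours separate the $d$'s.

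The main obstacle, therefore, is purely bookkeeping: making sure that every one of the roughly $(4 + n)^2$ pairs (collapsing vertices of the same type with generic indices $k,l,i,j$) is covered, and that every special case triggered by collisions of indices (such as $c_2 = c_n$ when $n = 2$) is excluded by the standing assumption $n \geq 3$. Once the table is complete, the lemma follows immediately from Theorem \ref{thm:condforfiniteindex}.
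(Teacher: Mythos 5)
Your overall strategy is exactly the paper's: the paper proves this lemma by saying it "can be shown as Lemma \ref{lem:PSOAGammahasfiniteindex}", i.e.\ by applying Theorem \ref{thm:condforfiniteindex} and checking $\lk(u) \subseteq \st(w) \Rightarrow u = w$ entry by entry, which is what you propose, and your reading of the links of $\Gamma'$ (in particular $\lk(b_2) = \{a_2, v_1, \dots, v_n\}$) is correct.

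However, two of your concrete witness claims are wrong and would leave holes if the table were filled in as you describe. First, $a_i$ does \emph{not} lie outside all stars except $\st(a_i)$ and $\st(b_i)$: the three vertices $a_1, a_2, a_3$ form a triangle, so $a_i \in \st(a_j)$ for every $j$. Hence for the columns $w = a_j$ you cannot use $a_i$ to rule out $\lk(b_i) \subseteq \st(a_j)$; instead use any $v_k \in \lk(b_i)$, which is adjacent to no $a$-vertex (the same fix is needed for your parenthetical treatment of $\lk(b_2)$ against $\st(a_1), \st(a_3)$). Second, and more importantly, for distinct $d$-vertices a $c$-vertex can \emph{never} serve as a witness, because every $c_j$ is adjacent to every $d_i$, so all $c$-vertices lie in $\st(d_{i'})$; your phrase ``a $c$-vertex whose $v$-neighbours separate the $d$'s'' does not produce an element of $\lk(d_i) \setminus \st(d_{i'})$. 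The correct witnesses are the $v$- and $b$-neighbours: e.g.\ $v_2$ is in fact the \emph{only} element of $\lk(d_2) \setminus \st(d_3)$ (since $d_2 \sim d_3$ and all $c_j \in \st(d_3)$), while $v_1$ or $b_1$ works for $\lk(d_1)$ against $\st(d_2)$ and $\st(d_3)$, and $v_3$ (using $n \geq 3$) or $b_3$ works for $\lk(d_3)$ against $\st(d_1)$ and $\st(d_2)$. With these corrections the bookkeeping goes through as in Table \ref{tab:lkusubseteqstv} and the lemma follows from Theorem \ref{thm:condforfiniteindex}, matching the paper's intended argument.
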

\begin{lem}\label{lem:Gamma'hasnographautos}
For this graph $\Gamma' = \Gamma'(\Lambda)$, $\Gamma'$ has no non-trival graph automorphisms.
\end{lem}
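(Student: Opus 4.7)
The plan is to show that an arbitrary graph automorphism $\phi$ of $\Gamma'$ must be the identity, by successively pinning down images of vertices using invariants whose values do not depend on the edges inherited from $\Lambda$. First I would record the ``gadget degrees'' of vertices outside $V(\Lambda)$: the $a_i$ have degree $3$, the $c_j$ have degree $5$, the $b_i$ have degree $n+1$ (if $i=2$) or $n+2$ (if $i=1,3$), and the $d_i$ have degree $n+2$ (if $i=1,2$) or $2n$ (if $i=3$). Only the $v_k$ have degrees that can depend on $\Lambda$, but they always have at least $6$ gadget-neighbours, so $\deg(v_k)\geq 6$. Since $n\geq 3$, this shows that $\{a_1,a_2,a_3\}$ is exactly the set of degree-$3$ vertices of $\Gamma'$ and $\{c_1,\dots,c_n\}$ is exactly the set of degree-$5$ vertices, so both sets are $\phi$-invariant.

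Next I would identify $\{d_1,d_2,d_3\}$ as the set of vertices adjacent to every $c_j$ (any $v_k$ is adjacent only to $c_{k-1}$ and $c_k$, and the $a_i,b_i$ are adjacent to no $c_j$), and $\{b_1,b_2,b_3\}$ as the set of non-$a$ neighbours of the $a_i$'s; both sets are then $\phi$-invariant. Within $\{d_1,d_2,d_3\}$ the vertex $d_3$ is singled out by having degree $2n\neq n+2$, and within $\{b_1,b_2,b_3\}$ the vertex $b_2$ is singled out by degree $n+1\neq n+2$, giving $\phi(d_3)=d_3$, $\phi(b_2)=b_2$, and $\phi(a_2)=a_2$. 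To separate the remaining $b_i$'s I would use that $b_1$ has a neighbour in the now-$\phi$-invariant set $\{d_1,d_2\}$ (namely $d_1$) while $b_3$ has none (its only $d$-neighbour is $d_3$); this yields $\phi(b_1)=b_1$, $\phi(b_3)=b_3$, and hence $\phi(a_1)=a_1$, $\phi(a_3)=a_3$. Then $d_1$ is the unique element of $\{d_1,d_2\}$ adjacent to $b_1$, so $\phi(d_1)=d_1$ and $\phi(d_2)=d_2$.

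It remains to fix the $v_k$ and $c_j$. Since $\lk(d_1)=\{c_1,\dots,c_n,v_1,b_1\}$ and $b_1$ is fixed, the vertex $v_1$ is the unique neighbour of $d_1$ lying outside $\{c_1,\dots,c_n\}\cup\{b_1\}$; analogously $v_2$ is the unique neighbour of $d_2$ outside $\{c_1,\dots,c_n\}\cup\{d_3\}$. Then $c_1$ is the unique common $c$-neighbour of $v_1$ and $v_2$, because the $c$-neighbours of $v_1$ are $\{c_n,c_1\}$ and those of $v_2$ are $\{c_1,c_2\}$, with intersection $\{c_1\}$ thanks to $c_n\neq c_2$ (this is where $n\geq 3$ is essential). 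This simultaneously fixes $c_1$, $c_n$ and $c_2$, after which a straightforward induction along the cycle $c_k\to v_{k+1}\to c_{k+1}$ (using the known $c$-neighbours of each $v_k$ and $v$-neighbours of each $c_j$) fixes all remaining vertices, so $\phi=\id$. The main obstacle I anticipate is that $\Lambda$-edges could in principle produce degree coincidences between the $v_k$'s and gadget vertices; the argument sidesteps this by only ever comparing degrees within already $\phi$-invariant sets of gadget vertices (where $\Lambda$ plays no role) and by invoking the uniform bound $\deg(v_k)\geq 6$.
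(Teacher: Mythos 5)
Your overall strategy -- using degrees and adjacencies to show that each ``level'' $\{a_1,a_2,a_3\}$, $\{b_1,b_2,b_3\}$, $\{c_1,\dots,c_n\}$, $\{d_1,d_2,d_3\}$, $V(\Lambda)$ is preserved by any automorphism $\phi$, and then pinning the vertices down one by one -- is exactly the route the paper sketches, and most of your individual steps check out. However, the claim that $\{c_1,\dots,c_n\}$ is exactly the set of degree-$5$ vertices is false in the smallest cases the construction must cover: for $n=3$ the vertices $b_1,b_3,d_1,d_2$ all have degree $n+2=5$, and for $n=4$ the vertex $b_2$ has degree $n+1=5$; your own degree list shows the claim only holds for $n\geq 5$. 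This is not cosmetic, because the $\phi$-invariance of $\{c_1,\dots,c_n\}$ is load-bearing in everything that follows: you use it to identify $\{d_1,d_2,d_3\}$ as the vertices adjacent to every $c_j$, to single out $v_1$ and $v_2$ as the unique neighbours of $d_1$ and $d_2$ outside the $c$-set, and to run the induction around the cycle $v_1,c_1,v_2,c_2,\dots$. So as written the argument is incomplete precisely for $n\in\{3,4\}$.

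The gap is repairable inside your framework. The degree-$3$ step is fine (every other vertex has degree at least $4$ when $n\geq 3$), so $\{a_1,a_2,a_3\}$ and hence $\{b_1,b_2,b_3\}$ (its non-$a$ neighbours) are invariant. Now let $S$ be the set of vertices having no neighbour in $\{a_1,a_2,a_3,b_1,b_2,b_3\}$; this set is invariant and equals $\{c_1,\dots,c_n\}\cup\{d_2\}$, since $d_1\sim b_1$, $d_3\sim b_3$, every $v_k$ is adjacent to the $b_i$, and $d_2$ is not adjacent to $b_2$. Inside $S$ each $c_j$ has exactly one neighbour (namely $d_2$), while $d_2$ has $n\geq 3$ neighbours there; hence $d_2$, and therefore $\{c_1,\dots,c_n\}=S\setminus\{d_2\}$, are invariant for all $n\geq 3$. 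With this substitution for your degree-$5$ step, the remainder of your argument goes through unchanged.
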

Lemmas \ref{lem:PSOAGamma'isALambda} and \ref{lem:PSOAGamma'hasfiniteindex} can be shown as Lemmas \ref{lem:PSOAGammaisALambda} and \ref{lem:PSOAGammahasfiniteindex} using Theorems \ref{thm:condforpsobeingaraag} and \ref{thm:condforfiniteindex}.
Thus, we only comment on the proof of Lemma \ref{lem:Gamma'hasnographautos}.
\begin{proof}[Proof sketch of Lemma \ref{lem:Gamma'hasnographautos}]
To show this, one uses the fact that a graph automorphism sends adjacent vertices to adjacent vertices and in particular preserves the degree of every vertex. One can first show that the levels of $\Gamma'$ are fixed, i.e. the vertices $a_i$ are mapped to vertices $a_j$ by any automorphism and so on. This is done from bottom to top, i.e. starting with the $a_i$, then the $b_j$ and so on. Next, one can show that this implies that the vertices also need be fixed pointwise by any automorphism, which concludes the proof.
\end{proof}
We finish this section with a corollary about the structure of the quotient $\Out(A_{\Gamma'})/\PSO(A_{\Gamma'})$ and compute the index of $\PSO(A_{\Gamma'})$ in $\Out(A_{\Gamma'})$. We still assume $n \geq 3$. One can prove a similar version of this corollary for $n < 3$ using the constructions in Appendix \ref{app:smallgraphs}.
\begin{cor}\label{cor:structureofquotient}
For the graph $\Gamma' = \Gamma'(\Lambda)$ as described before,
\[\Out(A_{\Gamma'})/\PSO(A_{\Gamma'}) \cong \left(\Z/2\Z\right)^N\]
and thus the index of $\PSO(A_{\Gamma'})$ in $\Out(A_{\Gamma'})$ is $2^N$, where $N = 2n+9$ is the number of vertices of $\Gamma'$.
\end{cor}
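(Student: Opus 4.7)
The plan is to combine Theorem \ref{thm:condforfiniteindex} with Lemmas \ref{lem:PSOAGamma'hasfiniteindex} and \ref{lem:Gamma'hasnographautos}, plus the fact that $A_{\Gamma'}$ is torsion-free. First, Lemma \ref{lem:PSOAGamma'hasfiniteindex} tells us that condition \eqref{eq:conditionforfiniteindex} holds for $\Gamma'$, so the decomposition sketched after Theorem \ref{thm:condforfiniteindex} applies and gives
\[\Out(A_{\Gamma'})/\PSO(A_{\Gamma'}) \cong \Inv \rtimes \Per.\]
By Lemma \ref{lem:Gamma'hasnographautos}, $\Gamma'$ has no non-trivial graph automorphisms, so the only $\Gamma'$-legal permutation is the identity; hence $\Per$ is trivial and the quotient reduces to just $\Inv$.

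Next, I would identify $\Inv$ with $(\Z/2\Z)^N$ where $N=|V(\Gamma')|$. In $\Aut(A_{\Gamma'})$, the inversions $\{I_v : v \in V(\Gamma')\}$ pairwise commute (they act on disjoint generators) and each has order $2$, so together they generate a quotient of $(\Z/2\Z)^N$. The hard part will be to upgrade this to an isomorphism after passing to $\Out$, i.e.\ to show that no non-trivial product of inversions is inner. For this, I would observe that $A_{\Gamma'}$ is torsion-free and that no vertex of $\Gamma'$ is adjacent to every other vertex (for instance, $a_1$ is adjacent only to $a_2,a_3,b_1$), so $Z(A_{\Gamma'})$ is trivial and $\Inn(A_{\Gamma'})\cong A_{\Gamma'}$ is torsion-free as well. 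Since every non-identity element of the $\Inv$-subgroup of $\Aut$ has order $2$, the intersection $\Inv \cap \Inn(A_{\Gamma'})$ must be trivial, and $\Inv$ therefore injects into $\Out(A_{\Gamma'})$ as $(\Z/2\Z)^N$.

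Finally, I would count the vertices of $\Gamma'$: the $n$ original vertices $v_i$, together with $a_1,a_2,a_3$, $b_1,b_2,b_3$, the $n$ vertices $c_i$, and $d_1,d_2,d_3$, for a total of $N = 2n+9$. Combining the above, $\Out(A_{\Gamma'})/\PSO(A_{\Gamma'})$ is an elementary abelian $2$-group of rank $2n+9$, and therefore the index of $\PSO(A_{\Gamma'})$ in $\Out(A_{\Gamma'})$ is $2^{2n+9}$, as claimed.
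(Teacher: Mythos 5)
Your proposal is correct and follows essentially the same route as the paper: reduce the quotient to $\Inv \rtimes \Per$ via the finite-index condition, make $\Per$ trivial using the absence of non-trivial graph automorphisms of $\Gamma'$, identify $\Inv \cong (\Z/2\Z)^N$, and count $N = 2n+9$ vertices. The only difference is that you also justify the injectivity of the inversion subgroup into $\Out(A_{\Gamma'})$ via the trivial centre and torsion-freeness of $\Inn(A_{\Gamma'})$, a step the paper simply asserts.
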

\begin{proof}
Using the generators of $\Aut(A_{\Gamma'})$ from Section \ref{sec:generators} one can show that if $\PSO(A_{\Gamma'})$ has finite index in $\Out(A_{\Gamma'})$ one has
\[\Out(A_{\Gamma'})/\PSO(A_{\Gamma'})\cong \Inv \rtimes \Per.\]
Here, $\Inv$ is the subgroup of $\Out(A_{\Gamma'})$ generated by inversions and $\Per$ is the subgroup generated by $\Gamma'$-legal permutations. Since $\Gamma'$ has no non-trivial graph automorphisms, the group $\Per$ of $\Gamma'$-legal permutations contains only the identity. Using that $\Inv \cong (\Z/2\Z)^N$ for $N$ the number of vertices of $\Gamma'$, we get that
\[\Out(A_{\Gamma'})/\PSO(A_{\Gamma'}) \cong \Inv \cong (\Z/2\Z)^N.\]
As $|\left(\Z/2\Z\right)^N| = 2^N$, we also get that the index of $\PSO(A_{\Gamma'})$ in $\Out(A_{\Gamma'})$ is $2^N$. To complete the proof, it remains to argue why $N$ satisfies the claimed equality. The graph $\Gamma'$ as constructed above has $2n+9$ vertices since $\Lambda$ has $n$ vertices and we add $n+9$ vertices.
\end{proof}

\section{Conclusion and outlook}\label{chap:conclusion}
We conclude this paper by summarising what we achieved and pointing out some possible directions for further research. In the paper \cite{DayWade}, Day and Wade gave a condition for when $\PSO(A_\Gamma)$ is a right-angled Artin group and which one it is. In this paper, we asked the question which right-angled Artin groups occur as (finite-index) subgroups of $\Out(A_\Gamma)$ when we vary $\Gamma$. Using the condition by Day--Wade and another condition by Wade--Brück, we answered this question in Theorem \ref{thm:anyraagisfinindsubgrofoutraag} by showing that every right-angled Artin group occurs as $\PSO(A_\Gamma)$ for some graph $\Gamma$ for which $\PSO(A_\Gamma)$ has finite index in $\Out(A_\Gamma)$.\par
A possible direction for further research is to simplify the construction or to impose further conditions as we did in Theorem \ref{thm:anyraagisfinindsubgrofoutraagnographauto}, where we additionally wanted that the graph $\Gamma$ has no non-trivial graph automorphisms. The advantage of this compared to the theorem above is that it simplifies the structure of the quotient $\Out(A_\Gamma)/\PSO(A_\Gamma)$. For other simplifications, one could try to reduce the number of vertices or to reduce the index of $\PSO(A_\Gamma)$ in $\Out(A_\Gamma)$.\par
Another interesting topic to think about is to go beyond the subgroup of pure symmetric outer automorphisms and investigate for which graphs $\Gamma$ a fixed right-angled Artin group $A_\Lambda$ is a (finite-index) subgroup of the outer automorphism group of $A_\Gamma$. As mentioned above, we showed in this paper, using the subgroup of pure symmetric outer automorphisms, that every right-angled Artin group $A_\Lambda$ occurs as a finite-index subgroup of $\Out(A_\Gamma)$ for some graph $\Gamma$. Also, for fixed $\Lambda$, one can, using the condition of Day--Wade, answer the question when $\PSO(A_\Gamma)$ is isomorphic to $A_\Lambda$. Namely, $A_\Lambda \cong \PSO(A_\Gamma)$ if and only if the graph $\Gamma$ satisfies that all its support graphs are forests and the graph described in Theorem \ref{thm:condforpsobeingaraag} is isomorphic to $\Lambda$. It would be interesting to generalise this to other examples that do not depend on the subgroup of pure symmetric outer automorphisms and maybe for fixed $\Lambda$ give conditions that a graph $\Gamma$ needs to satisfy so that $A_\Lambda$ is a finite-index subgroup of $\Out(A_\Gamma)$. Such results, even just for some graphs $\Lambda$, might give insights about how right-angled Artin groups interact with their outer automorphism groups and help to find more general results. However, a difficulty with this approach is that one cannot look at a fixed subgroup of $\Out(A_\Gamma)$ but needs to consider all subgroups of $\Out(A_\Gamma)$ at the same time.\par
Another interesting path one could take is to change the point of view and return to the question that Day and Wade asked; see \cite[Question 1.1]{DayWade}. This is to not ask for fixed $\Lambda$ when $A_\Lambda$ is a (finite-index) subgroup of the outer automorphism group of another right-angled Artin group, but to ask for fixed $\Gamma$ which right-angled Artin groups occur as a (finite-index) subgroup of $\Out(A_\Gamma)$ and if there are any at all. When restricted to the subgroup of pure symmetric outer automorphisms, this is again answered by the condition of Day--Wade, but it would be interesting to generalise this to arbitrary (finite-index) subgroups.\par
Finally, one could also try to find examples when the automorphism group $\Aut(A_\Gamma)$ has right-angled Artin groups as (finite-index) subgroups. One could ask the same questions as the ones asked by Day--Wade and in this paper. There is already a result known in this direction. Namely, Charney--Ruane--Stambaugh--Vijayan prove in \cite[Theorem 3.6]{CharneyetalPSA} that the group $\PSA(A_\Gamma)$ is a right-angled Artin group if the graph $\Gamma$ has no separating intersection of links. For $\PSA(A_\Gamma)$, one can still use the condition for finite index as in Theorem \ref{thm:condforfiniteindex} since $\PSO(A_\Gamma)$ has finite index in $\Out(A_\Gamma)$ if and only if $\PSA(A_\Gamma)$ has finite index in $\Aut(A_\Gamma)$. To show this, one needs that $\Inn(A_\Gamma)$ is a normal subgroup of $\Aut(A_\Gamma)$ and that $\Inn(A_\Gamma) \subseteq \PSA(A_\Gamma)$. Then, the natural map
\[(\Aut(A_\Gamma)/\Inn(A_\Gamma))/(\PSA(A_\Gamma)/\Inn(A_\Gamma)) \longrightarrow \Aut(A_\Gamma)/\PSA(A_\Gamma)\]
is well-defined and bijective. Thus, since
\[\Out(A_\Gamma)/\PSO(A_\Gamma) = (\Aut(A_\Gamma)/\Inn(A_\Gamma))/(\PSA(A_\Gamma)/\Inn(A_\Gamma)),\]
the index of $\PSO(A_\Gamma)$ in $\Out(A_\Gamma)$ is the same as the index of $\PSA(A_\Gamma)$ in $\Aut(A_\Gamma)$. One could try to use \cite{CharneyetalPSA} and the finite-index condition from Theorem \ref{thm:condforfiniteindex} to answer the question which right-angled Artin groups occur as a (finite-index) subgroup of $\Aut(A_\Gamma)$ for some $\Gamma$. Similarly to this paper, one could use computer programs to find examples and then try to generalise these.

\begin{appendices}
\section{Small graphs}\label{app:smallgraphs}
In this appendix, we give examples for the graphs with less than three vertices to complete the proofs of Theorems \ref{thm:anyraagisfinindsubgrofoutraag} and \ref{thm:anyraagisfinindsubgrofoutraagnographauto}. For every such graph $\Lambda$, we give a graph $\Gamma$ such that $\PSO(A_\Gamma) \cong A_\Lambda$, $\PSO(A_\Gamma)$ has finite index in $\Out(A_\Gamma)$ and $\Gamma$ has no non-trivial graph automorphisms. These graphs were found using computer programs that can be found on \cite{Code}. This code can also be used to check that they satisfy the conditions of Theorems \ref{thm:anyraagisfinindsubgrofoutraag} and \ref{thm:anyraagisfinindsubgrofoutraagnographauto}. In this appendix, we only sketch why the graph $\Theta$ as defined in Theorem \ref{thm:condforpsobeingaraag} is indeed~$\Lambda$.\par
Define $\Lambda_1$ as the graph with one vertex, $\Lambda_2$ as the graph with two vertices and no edge and $\Lambda_3$ as the graph with two vertices and an edge. Then one can show that the graphs $\Gamma_1$, $\Gamma_2$ and $\Gamma_3$ as in Figures \ref{fig:smallgraphsonevertex}, \ref{fig:smallgraphstwoverticesnoedge} and \ref{fig:smallgraphstwoverticesoneedge} satisfy the conditions stated above, i.e. $\PSO(A_{\Gamma_i}) \cong A_{\Lambda_i}$ (using Theorem \ref{thm:condforpsobeingaraag}), $\PSO(A_{\Gamma_i})$ has finite index in $\Out(A_{\Gamma_i})$ (using Theorem \ref{thm:condforfiniteindex}) and $\Gamma_i$ has no non-trivial graph automorphisms.\par
As mentioned above, we only sketch $\PSO(A_{\Gamma_i}) \cong A_{\Lambda_i}$; for the complete arguments, the computer programs from \cite{Code} can be used. For $\Gamma_1$, $v_9$ is the only vertex with a separating star. The support graph consists of two vertices without an edge and thus we have only one vertex $\beta_1^{v_9}$ in the graph $\Theta_1$ as defined in Theorem \ref{thm:condforpsobeingaraag}. For all other vertices $v_i$, $\Gamma_1 - \st{v_i}$ is connected and hence they do not define vertices in $\Theta_1$. Thus, we have $\Theta_1 \cong \Lambda_1$. For $\Gamma_2$, the vertices $v_6$ and $v_7$ are the only ones with separating stars. Both their support graphs have two vertices and an edge, so the graph $\Theta_2$ has two vertices $\alpha_{e_6}^{v_6}$ and $\alpha_{e_7}^{v_7}$, where $e_6$ and $e_7$ are the edges in the support graphs of $v_6$ and $v_7$ respectively. These two vertices satisfy the condition for when there is no edge between vertices of type I and thus the graph $\Theta_2$ has no edge. Thus, we have $\Theta_2 \cong \Lambda_2$. For $\Gamma_3$, the vertices $v_8$ and $v_9$ are the only vertices with separating stars and their support graphs have both two vertices and no edge. Thus, in $\Theta_3$, we have two vertices $\beta_1^{v_8}$ and $\beta_1^{v_9}$ and an edge. Hence, we have $\Theta_3 \cong \Lambda_3$. Using Theorem \ref{thm:condforpsobeingaraag}, we get that $\PSO(A_{\Gamma_i}) \cong A_{\Lambda_i}$.
\begin{figure}[h]
    \centering
    \begin{subfigure}[b]{0.49\textwidth}
        \centering
        \begin{tikzpicture}[scale=1.25]
            \coordinate (1) at (-1,2);
            \coordinate (2) at (0,2);
            \coordinate (3) at (1,2);
            \coordinate (4) at (-2,1);
            \coordinate (5) at (-1,1);
            \coordinate (6) at (0,1);
            \coordinate (7) at (1,1);
            \coordinate (8) at (2,1);
            \coordinate (9) at (0,0);
            \coordinate (10) at (-1,-1);
            \coordinate (11) at (1,-1);
            
            \draw (1) -- (4);

            \node at (4) [above=0.05, fill=white] {$v_4$};

            \draw (1) -- (5);
            \draw (2) -- (6);
            \draw (3) -- (4);
            \draw (3) -- (6);
            \draw (4) -- (11);            
            \draw (8) -- (10);

            \node at (1) [above=0.05, fill=white] {$v_1$};
            \node at (2) [above=0.05, fill=white] {$v_2$};
            \node at (3) [above=0.05, fill=white] {$v_3$};
            \node at (5) [above=0.05, fill=white] {$v_5$};
            \node at (6) [above=0.05, fill=white] {$v_6$};
            \node at (7) [above=0.05, fill=white] {$v_7$};
            \node at (8) [above=0.05, fill=white]{$v_8$};
            \node at (9) [below=0.05, fill=white] {$v_9$};
            \node at (10) [below=0.05, fill=white] {$v_{10}$};
            \node at (11) [below=0.05, fill=white] {$v_{11}$};
            
            \draw (1) -- (2);
            \draw (2) -- (3);
            \draw (2) -- (8);
            \draw (3) -- (8);
            \draw (4) -- (9);
            \draw (4) -- (10);
            \draw (5) -- (6);
            \draw (5) -- (9);
            \draw (5) -- (10);
            \draw (6) -- (7);
            \draw (6) -- (9);
            \draw (7) -- (8);
            \draw (7) -- (9);
            \draw (7) -- (11);
            \draw (8) -- (9);
            \draw (10) -- (11);
            
            \fill (1) circle (1.5pt);
            \fill (2) circle (1.5pt);
            \fill (3) circle (1.5pt);
            \fill (4) circle (1.5pt);
            \fill (5) circle (1.5pt);
            \fill (6) circle (1.5pt);
            \fill (7) circle (1.5pt);
            \fill (8) circle (1.5pt);
            \fill (9) circle (1.5pt);
            \fill (10) circle (1.5pt);
            \fill (11) circle (1.5pt);
        \end{tikzpicture}
        \caption{The graph $\Gamma_1$}
        \label{fig:smallgraphsonevertex}
    \end{subfigure}
    \hfill
    \begin{subfigure}[b]{0.49\textwidth}
        \centering
        \begin{tikzpicture}[scale=1.25]
            \coordinate (1) at (-1,2);
            \coordinate (2) at (1,2);
            \coordinate (3) at (-2,1);
            \coordinate (4) at (0,1);
            \coordinate (5) at (2,1);
            \coordinate (6) at (-1,0);
            \coordinate (8) at (-1,-1);
            \coordinate (7) at (1,0);
            \coordinate (9) at (1,-1);
            
            \draw (1) -- (3);

            \node at (1) [above=0.05, fill=white] {$v_1$};
            \node at (2) [above=0.05, fill=white] {$v_2$};
            \node at (3) [above=0.05, fill=white] {$v_3$};
            \node at (4) [above=0.05, fill=white] {$v_4$};
            \node at (5) [above=0.05, fill=white] {$v_5$};
            \node at (7) [below=0.05, fill=white] {$v_7$};
            \node at (6) [below=0.05, fill=white] {$v_6$};
            \node at (8) [below=0.05, fill=white] {$v_8$};
            \node at (9) [below=0.05, fill=white] {$v_9$};
    
            \draw (1) -- (2);
            \draw (1) -- (5);
            \draw (1) -- (6);
            \draw (2) -- (3);
            \draw (2) -- (4);
            \draw (2) -- (7);
            \draw (3) -- (6);
            \draw (3) -- (7);
            \draw (3) -- (8);
            \draw (4) -- (6);
            \draw (4) -- (7);
            \draw (4) -- (8);
            \draw (4) -- (9);
            \draw (5) -- (6);
            \draw (5) -- (7);
            \draw (5) -- (9);
            \draw (8) -- (9);
            
            \fill (1) circle (1.5pt);
            \fill (2) circle (1.5pt);
            \fill (3) circle (1.5pt);
            \fill (4) circle (1.5pt);
            \fill (5) circle (1.5pt);
            \fill (6) circle (1.5pt);
            \fill (7) circle (1.5pt);
            \fill (8) circle (1.5pt);
            \fill (9) circle (1.5pt);
            \end{tikzpicture}
        \caption{The graph $\Gamma_2$}
        \label{fig:smallgraphstwoverticesnoedge}
    \end{subfigure}\\
    \vspace{0.4cm}
    \begin{subfigure}[b]{0.49\textwidth}
        \centering
        \begin{tikzpicture}[scale=1.25]
            \coordinate (1) at (-2,3);
            \coordinate (2) at (2,3);
            \coordinate (3) at (-2,2);
            \coordinate (4) at (-1,2);
            \coordinate (5) at (0,2);
            \coordinate (6) at (1,2);
            \coordinate (7) at (2,2);
            \coordinate (8) at (-1,1);
            \coordinate (9) at (1,1);
            \coordinate (10) at (-2,0);
            \coordinate (11) at (2,0);

            \draw (1) -- (3);
            \draw (2) -- (7);
            
            \node at (4) [above=0.05, fill=white] {$v_4$};
            \node at (6) [above=0.05, fill=white] {$v_6$};

            \draw (2) -- (4);
            \draw (1) -- (6);

            \node at (1) [above=0.05, fill=white] {$v_1$};
            \node at (2) [above=0.05, fill=white] {$v_2$};
            \node at (3) [above=0.05, fill=white] {$v_3$};
            \node at (5) [above=0.05, fill=white] {$v_5$};
            \node at (7) [above=0.05, fill=white] {$v_7$};
            \node at (8) [below=0.05, fill=white] {$v_8$};
            \node at (9) [below=0.05, fill=white] {$v_9$};
            \node at (10) [below=0.05, fill=white] {$v_{10}$};  
            \node at (11) [below=0.05, fill=white] {$v_{11}$};
    
            \draw (1) -- (2);
            \draw (1) -- (5);
            \draw (2) -- (5);
            \draw (3) -- (4);
            \draw (3) -- (8);
            \draw (4) -- (5);
            \draw (4) -- (8);
            \draw (4) -- (9);
            \draw (4) -- (10);
            \draw (5) -- (6);
            \draw (5) -- (9);
            \draw (6) -- (8);
            \draw (6) -- (9);
            \draw (6) -- (11);
            \draw (7) -- (8);
            \draw (7) -- (9);
            \draw (7) -- (11);
            \draw (8) -- (9);
            \draw (10) -- (11);
    
            \fill (1) circle (1.5pt);
            \fill (2) circle (1.5pt);
            \fill (3) circle (1.5pt);
            \fill (4) circle (1.5pt);
            \fill (5) circle (1.5pt);
            \fill (6) circle (1.5pt);
            \fill (7) circle (1.5pt);
            \fill (8) circle (1.5pt);
            \fill (9) circle (1.5pt);
            \fill (10) circle (1.5pt);  
            \fill (11) circle (1.5pt);
        \end{tikzpicture}
        \caption{The graph $\Gamma_3$}
        \label{fig:smallgraphstwoverticesoneedge}
        \end{subfigure}
        \caption{Proving Theorems \ref{thm:anyraagisfinindsubgrofoutraag} and \ref{thm:anyraagisfinindsubgrofoutraagnographauto} for the small graphs}
        \label{fig:smallgraphs}
\end{figure}
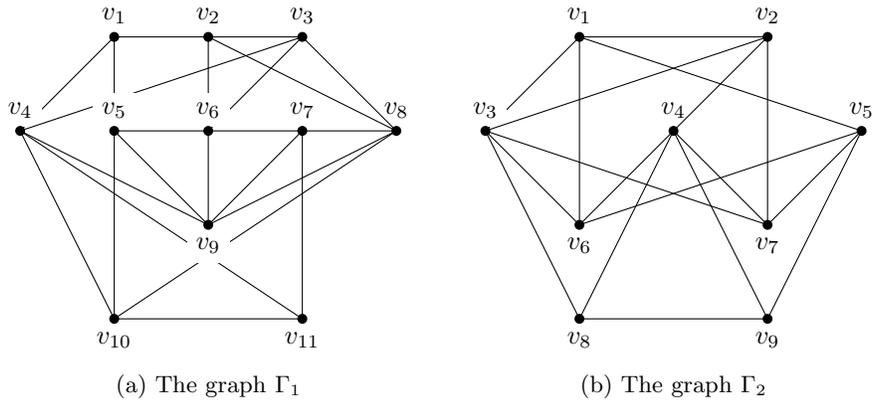
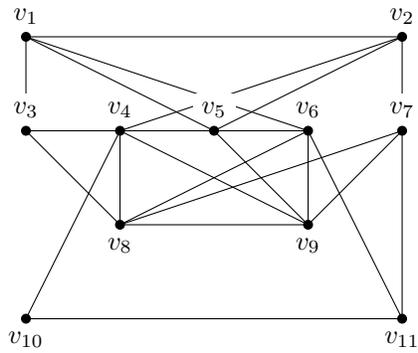
\end{appendices}

\clearpage
\printbibliography

\bigskip
Affiliation:\\
Department of Mathematics\\
ETH Zurich\\
8092 Zurich, Switzerland

\bigskip
Current address:\\
Manuel Wiedmer\\
Institute for Theoretical Computer Science\\
Department of Computer Science\\
ETH Zurich\\
8092 Zurich, Switzerland\\
\href{mailto:manuel.wiedmer@inf.ethz.ch}{manuel.wiedmer@inf.ethz.ch}

\end{document}